\let\oldequation\equation
\let\oldendequation\endequation
\renewenvironment{equation}{\linenomathNonumbers\oldequation}{\oldendequation\endlinenomath}
\let\expandafter\oldequationstar\csname equation*\endcsname
\let\expandafter\oldendequationstar\csname endequation*\endcsname
\renewenvironment{equation*}{\linenomathNonumbers\oldequationstar}{\oldendequationstar\endlinenomath}
\let\oldalign\align
\let\oldendalign\endalign
\let\expandafter\oldalignstar\csname align*\endcsname
\let\expandafter\oldendalignstar\csname endalign*\endcsname
\renewenvironment{align*}{\linenomathNonumbers\oldalignstar}{\oldendalignstar\endlinenomath}
\newcounter{intro}
\newcounter{result}
\newtheorem{introthm}[intro]{Theorem}
\newtheorem{introcor}[intro]{Corollary}
\newtheorem{introconjecture}[intro]{Conjecture}
\theoremstyle{definition}
\newtheorem{theorem}[result]{Theorem}
\newtheorem{lemma}[result]{Lemma}
\newtheorem{proposition}[result]{Proposition}
\newtheorem{corollary}[result]{Corollary}
\newtheorem{definition}[result]{Definition}
\newtheorem{example}[result]{Example}
\newtheorem{remark}[result]{Remark}
\newtheorem*{ack}{Acknowledgements}
\newtheorem*{addendum}{Addendum}
\newtheorem*{notation}{Notation}
\numberwithin{result}{section}
\numberwithin{equation}{subsection}
\title[Descent and generation]{Descent conditions for generation in derived categories}
\author[P.~Lank]{Pat Lank}
\address{P.~Lank,
Department of Mathematics,
University of South Carolina, 
Columbia, SC 29208,
U.S.A.}
\email{plankmathematics@gmail.com}
\date{\today}
\keywords{derived categories, strong generator, classical generator, Rouquier dimension, descent conditions, quasiexcellent schemes, derived splinters, resolution of singularities}
\subjclass[2020]{14F08 (primary), 14A30, 13D09, 18G80} 
\begin{document}

\begin{abstract}
    This work establishes a condition that determines when strong generation in the bounded derived category of a Noetherian $J\textrm{-}2$ scheme is preserved by the derived pushforward of a proper morphism. Consequently, we can produce upper bounds on the Rouquier dimension of the bounded derived category, and applications concerning affine varieties are studied. In the process, a necessary and sufficient constraint is observed for when a tensor-exact functor between rigidly compactly generated tensor triangulated categories preserves strong $\oplus$-generators.
\end{abstract}

\maketitle
\setcounter{tocdepth}{2}
\numberwithin{equation}{section}

\section{Introduction}
\label{sec:intro}

The derived category of bounded complexes with coherent cohomology on a scheme $X$ is denoted $D^b_{\operatorname{coh}}(X)$. This triangulated category has been an ignition for activity in recent years and in particular for algebraic geometry \cite{Orlov:2005,Bridgeland:2002,Kuznetsov:2021,Ballard/Favero/Katzarkov:2019}. There are powerful methodologies that strive to understand the structure of $D^b_{\operatorname{coh}}(X)$ \cite{Bondal/Orlov:1995,Okawa:2011, Alexeev/Orlov:2013,Kawamata:2006, Bridgeland:2002,Bridgeland:2007,Perry/Pertusi/Zhao:2022}. Amongst these closely related to our work are semiorthogonal decompositions which have produced a framework for treating geometric operations in the minimal model program as categorical transformations on $D^b_{\operatorname{coh}}(X)$ \cite{Kuznetsov:2021,Kuznetsov/Perry:2021,Kuznetsov:2007}. A major open problem in this area is the Bondal-Orlov localization conjecture \cite{Efimov:2020,Bondal/Orlov:2002}.

\begin{introconjecture}\label{conj:bondal_orlov_localization}
    Suppose $X$ is a variety over a field. If $\pi\colon \widetilde{X} \to X$ is a resolution of singularities and $X$ rational singularities, then the functor $\mathbb{R}\pi_\ast \colon D^b_{\operatorname{coh}}(\widetilde{X}) \to D^b_{\operatorname{coh}}(X)$ is a Verdier localization.
\end{introconjecture}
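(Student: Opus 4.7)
The plan is to attack this via the standard criterion that $\mathbb{R}\pi_\ast$ is a Verdier localization if and only if the induced functor $D^b_{\operatorname{coh}}(\widetilde{X})/\ker(\mathbb{R}\pi_\ast) \to D^b_{\operatorname{coh}}(X)$ is an equivalence of triangulated categories. Concretely, one must verify two things: essential surjectivity of $\mathbb{R}\pi_\ast$ up to isomorphism, and the calculus-of-fractions property that every morphism in the target is computed by a roof upstairs whose denominator lies in the kernel.

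For essential surjectivity, the standard input is the projection formula together with $\mathbb{R}\pi_\ast \mathcal{O}_{\widetilde{X}} \cong \mathcal{O}_X$, which is exactly the rational singularities hypothesis. This yields $\mathbb{R}\pi_\ast \mathbb{L}\pi^\ast F \cong F$ for every $F \in D^b_{\operatorname{coh}}(X)$, provided $\mathbb{L}\pi^\ast F$ lies in the bounded derived category, which uses the finite Tor-dimension of $\pi$ coming from regularity of $\widetilde{X}$. So the essentially surjective part is relatively formal.

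The fully faithful part on the quotient is where the real work lies, and this is where I would try to deploy the descent framework of this paper. The plan is to show that if $G$ strongly generates $D^b_{\operatorname{coh}}(\widetilde{X})$, then the paper's descent condition forces $\mathbb{R}\pi_\ast G$ to strongly generate $D^b_{\operatorname{coh}}(X)$; one would then try to reduce the verification of the calculus of fractions to Hom-computations between iterated cones on $G$ and $\mathbb{R}\pi_\ast G$, where the projection formula and Grothendieck duality give enough control to identify the quotient morphisms with honest morphisms downstairs.

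The hard part will be controlling the kernel $\ker(\mathbb{R}\pi_\ast)$ well enough to actually perform the roof calculus. The bounded derived category is not cocomplete, so the Bousfield colocalization techniques that identify Verdier quotients in rigidly compactly generated settings (as used for the $\oplus$-generation results in the paper) are unavailable here; one cannot simply take infinite direct sums of exceptional fibers. Any genuine resolution would seem to require either a stratification reducing to exceptional divisors where a $t$-structure-compatible complement to $\ker(\mathbb{R}\pi_\ast)$ can be constructed by hand, or a new rigidifying hypothesis that bridges the strong generation descent obtained here to a statement about morphisms. The realistic expectation is that the descent machinery developed in this paper furnishes structural constraints — bounds on Rouquier dimension of $D^b_{\operatorname{coh}}(X)$ in terms of that of $D^b_{\operatorname{coh}}(\widetilde{X})$ — that any proof of \Cref{conj:bondal_orlov_localization} must respect, rather than a full resolution of the conjecture itself.
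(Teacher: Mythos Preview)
The statement you are addressing, \Cref{conj:bondal_orlov_localization}, is presented in the paper as an \emph{open conjecture}, not as a theorem; there is no proof of it anywhere in the paper. It appears in the introduction solely as motivation, with citations to the handful of special cases (quotient singularities in characteristic zero, cones over projectively normal smooth Fano varieties, resolutions with one-dimensional fibers) where it has been established by other authors. Your own final paragraph already lands on the correct assessment: the descent results of this paper give structural constraints---preservation of classical and strong generation under $\mathbb{R}\pi_\ast$, and the resulting bounds on Rouquier dimension---that are \emph{consistent with} what the conjecture would predict, but they do not address the morphism-level calculus of fractions that a genuine proof would require. So there is nothing to compare your proposal against, and your proposal is not a proof but (accurately) an outline of why the problem is hard.

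One technical correction to your sketch of essential surjectivity: regularity of $\widetilde{X}$ is not what gives $\pi$ finite Tor-dimension. Finite Tor-dimension of $\pi$ is a condition on $\mathcal{O}_{\widetilde{X}}$ as a module over $\pi^{-1}\mathcal{O}_X$, and it is regularity of the \emph{target} that would make this automatic, not of the source. For a resolution of a variety with rational singularities it is not clear that $\mathbb{L}\pi^\ast$ preserves $D^b_{\operatorname{coh}}$ in general, so even the ``formal'' part of your outline would need a different argument (for instance, working instead with a right adjoint, or establishing essential surjectivity by other means).
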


Roughly speaking, the functor $\mathbb{R}\pi_\ast \colon D^b_{\operatorname{coh}}(\widetilde{X}) \to D^b_{\operatorname{coh}}(X)$ being a Verdier localization means the category $D^b_{\operatorname{coh}}(X)$ can be expressed as a sort of quotient category of $D^b_{\operatorname{coh}}(\widetilde{X})$ with the induced triangulated structure. Conjecture~\ref{conj:bondal_orlov_localization} guarantees examinations on $D^b_{\operatorname{coh}}(\widetilde{X})$
descend to $D^b_{\operatorname{coh}}(X)$, and it has been recently confirmed for quotient singularities in characteristic zero \cite{Pavic/Shinder:2021}, cones over a projectively normal smooth Fano variety \cite{Efimov:2020}, and singularities admitting a resolution with one-dimensional fibers \cite{Bondal/Kap/Schechtman:2018}.

There is another methodology for extracting information from $D^b_{\operatorname{coh}}(X)$, and this is done by working object-wise. The following gadgets for $D^b_{\operatorname{coh}}(X)$ were introduced in \cite{BVdB:2003, ABIM:2010, Rouquier:2008}, and they break up the homological information into atomic-like bits. Roughly speaking, an object $G$ in $D^b_{\operatorname{coh}}(X)$ is said to be a \textit{classical generator} if every object $E$ in $D^b_{\operatorname{coh}}(X)$ can be finitely built from $G$ by taking cones, shifts, and direct summands. The minimal number of cones needed to finitely build $E$ from $G$ is called the \textit{level} of $E$ with respect to $G$ and is denoted $\operatorname{level}_{D^b_{\operatorname{coh}}(X)}^G (E)$. If the maximum of $\operatorname{level}_{D^b_{\operatorname{coh}}(X)}^G (E)$ is finite as one ranges over all objects $E$ in $D^b_{\operatorname{coh}}(X)$, then $G$ is called a \textit{strong generator}, and this value is called its \textit{generation time}. The \textit{Rouquier dimension} of $D^b_{\operatorname{coh}}(X)$ is the minimal generation time amongst all strong generators and is denoted $\dim D^b_{\operatorname{coh}}(X)$. 

There are many naturally occurring instances where classical generators exist in $D^b_{\operatorname{coh}}(X)$. Recall a Noetherian scheme $X$ is said to be \textit{$J\textrm{-}2$} if for all morphisms $Y \to X$ locally of finite type, the regular locus of $Y$ is open. A few familiar cases include varieties over a field and algebras of finite type over the integers. In \cite{Elagin/Lunts/Schnurer:2020} it was shown $D^b_{\operatorname{coh}}(X)$ admits a classical generator whenever $X$ is a Noetherian $J\textrm{-}2$ scheme. However, there are not many general results that allow one to identify such objects in $D^b_{\operatorname{coh}}(X)$, which is a motivating problem for our work.

Returning to strong generation, an important case where one can explicitly describe a strong generator in $D^b_{\operatorname{coh}}(X)$ is for smooth quasi-projective varieties over a field \cite{Rouquier:2008,Orlov:2009}. However, once the variety is not smooth, the story becomes far more complicated. It has been exhibited that strong generators capture inherent properties \cite{BILMP:2023,Pollitz:2019,Iyengar/Takahashi:2019,Iyengar/Takahashi:2016,ABIM:2010}. The following major problem poses a deep connection between two invariants which appear unrelated at first glance \cite{Orlov:2009}.

\begin{introconjecture}\label{conj:orlov_dimension}
    The Krull dimension of a smooth projective variety $X$ over a field is the Rouquier dimension of $D^b_{\operatorname{coh}}(X)$.
\end{introconjecture}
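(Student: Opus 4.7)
The plan is to establish $\dim D^b_{\operatorname{coh}}(X) = \dim X$ by proving the two inequalities separately. For the lower bound $\dim D^b_{\operatorname{coh}}(X) \geq \dim X$, I would run Rouquier's local argument. Given any strong generator $G$ of generation time $n$ and a closed point $x \in X$, the skyscraper sheaf $k(x)$ sits in $D^b_{\operatorname{coh}}(X)$ and can be built from $G$ in at most $n$ cones. Passing to the completed local ring $\widehat{\mathcal{O}}_{X,x}$, which is regular of dimension $\dim X$ by smoothness, the image of $G$ becomes a perfect complex. A ghost-map argument (equivalently, the New Intersection Theorem) then forces $n \geq \operatorname{pd}_{\widehat{\mathcal{O}}_{X,x}} k(x) = \dim \mathcal{O}_{X,x}$. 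Maximizing over $x$ and $G$ yields the claim.

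For the reverse inequality $\dim D^b_{\operatorname{coh}}(X) \leq \dim X$, the approach I would pursue is resolution of the diagonal. With $\Delta \colon X \hookrightarrow X \times_k X$ and projections $p_1, p_2$, the target is to exhibit a bounded complex on $X \times X$ of length at most $\dim X$, quasi-isomorphic to $\mathcal{O}_\Delta$, with terms of the shape $\bigoplus_i F_i \boxtimes G_i$. If such a resolution exists, then $G \colonequals \bigoplus_i G_i$ would have generation time at most $\dim X$: for any $E \in D^b_{\operatorname{coh}}(X)$ the Fourier-Mukai identity
\begin{equation*}
E \simeq \mathbb{R}p_{2,\ast}\bigl(p_1^\ast E \otimes^{\mathbb{L}} \mathcal{O}_\Delta\bigr)
\end{equation*}
presents $E$ as an iterated cone of length $\dim X$ over objects of the form $\mathbb{R}\Gamma(X, E \otimes F_i) \otimes_k G_i$, each of which is built from $G$ in one step since $\mathbb{R}\Gamma(X, E \otimes F_i)$ is perfect over $k$.

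The principal difficulty, and the reason Conjecture~\ref{conj:orlov_dimension} remains open in general, is the construction of such a length-$\dim X$ diagonal resolution for an arbitrary smooth projective $X$. Beilinson's resolution handles projective space, with variants known for flag varieties, Grassmannians, toric varieties, and certain surfaces, but no uniform construction is available. A complementary avenue I would explore is induction on $\dim X$: choose an embedding $X \hookrightarrow \mathbb{P}^N$, take a smooth hyperplane section $H$, and compare strong generators of $D^b_{\operatorname{coh}}(X)$ and $D^b_{\operatorname{coh}}(H)$ via Lefschetz-type restriction, or via the descent mechanisms for generation under proper pushforward developed in the body of this paper. Either path ultimately demands precise control of generation times under six-functor operations, and it is this bookkeeping that stands as the main obstacle.
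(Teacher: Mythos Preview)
The statement you are attempting to prove is recorded in the paper as a \emph{conjecture}, not a theorem; the paper offers no proof, only a list of known special cases (projective spaces, quadrics, Grassmannians, curves, certain surfaces and Fano threefolds, toric varieties). There is therefore no ``paper's own proof'' against which to compare your proposal.

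That said, your outline is an accurate summary of what is known and where the obstruction lies. The lower bound $\dim D^b_{\operatorname{coh}}(X) \geq \dim X$ is indeed a theorem of Rouquier, established essentially as you describe via localization at a closed point and a ghost argument. The upper bound is the open direction, and you correctly identify the resolution-of-the-diagonal strategy as the mechanism behind every known case; you also correctly note that no uniform construction of a length-$\dim X$ resolution of $\mathcal{O}_\Delta$ by external tensor products is available. Your alternative suggestion of a Lefschetz-type induction on hyperplane sections, or of leveraging the descent results of this paper, is speculative: the descent theorems here (Theorems~\ref{thm:descent_conditions} and \ref{thm:proper_surjective_descent}) control generation under proper \emph{pushforward}, and the bounds they produce (e.g.\ Corollary~\ref{cor:rouquir_dimension_bounds_affine_variety}) involve a multiplicative factor $\operatorname{level}^{\mathbb{R}\pi_\ast G}(\mathcal{O}_X)$ that in general exceeds $1$, so they do not by themselves recover the sharp bound $\dim X$.

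In short, your proposal is not a proof and you acknowledge as much; it is a correct survey of the standard approaches together with an honest statement of the gap. Since the paper itself does not claim to resolve the conjecture, there is nothing further to compare.
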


Conjecture~\ref{conj:orlov_dimension} has been confirmed for various types of varieties, including projective spaces, quadrics, Grassmannians \cite{Rouquier:2008}; Del Pezzo surfaces and Fano threefolds of type $V_5$ or $V_{22}$ \cite{Ballard/Favero:2012}; smooth proper curves \cite{Orlov:2009}; a product of two Fermat elliptic curves, a Fermat $K3$ surface \cite{Ballard/Favero/Katzarkov:2014}; and toric varieties \cite{Hanlon/Hicks/Lazarev:2023}. If $X$ is a $d$-dimensional variety with rational singularities, then validity of Conjecture~\ref{conj:bondal_orlov_localization} and Conjecture~\ref{conj:orlov_dimension} promises the Rouquier dimension of $D^b_{\operatorname{coh}}(X)$ is $d$.

Recently, \cite{Aoki:2021} has shown $D^b_{\operatorname{coh}}(X)$ admits a strong generator when $X$ is a Noetherian quasi-excellent separated scheme of finite Krull dimension- i.e varieties over a field. This enriches the work of \cite{BVdB:2003, Rouquier:2008, Iyengar/Takahashi:2016, KMVdB:2011, Lunts:2010, Neeman:2021}. The results in \cite{Aoki:2021} were established by working in the unbounded derived category of complexes with quasi-coherent cohomology which is denoted by $D_{\operatorname{Qcoh}}(X)$. An important property satisfied by the triangulated category $D_{\operatorname{Qcoh}}(X)$ is that all small coproducts exist, and \cite{Aoki:2021} exploited this to detect strong generation in $D^b_{\operatorname{coh}}(X)$ as follows. Roughly speaking, an object $G$ in $D_{\operatorname{Qcoh}}(X)$ is a \textit{strong $\bigoplus$-generator} when every object of $D_{\operatorname{Qcoh}}(X)$ can be finitely built from $G$ using direct summands of small coproducts, shifts, and cones in $D_{\operatorname{Qcoh}}(X)$ where the number of cones required for any object has a uniform upper bound. This was introduced in \cite{BVdB:2003,Rouquier:2008}. It was shown in \cite{Aoki:2021} that there exists an object in $D^b_{\operatorname{coh}}(X)$ which is a strong $\oplus$-generator for $D_{\operatorname{Qcoh}}(X)$, and any such object is a strong generator for $D^b_{\operatorname{coh}}(X)$.

In light of these remarks, this work studies the behavior of generation along the derived pushforward induced by a morphism of Noetherian schemes. Our choice to study this behavior under the derived pushforward as opposed to something such as derived pullback is that boundedness for cohomology is generally not preserved by the latter. However, if the morphism of schemes is proper, then the derived pushforward will preserve bounded complexes with coherent cohomology \cite{Grothendieck:1960}. This property which is enjoyed by a proper morphism will be vital for studying strong generation in $D^b_{\operatorname{coh}}(X)$.

We start by first establishing a condition for when a tensor-exact functor between tensor triangulated categories that are rigidly-compactly generated preserves strong $\oplus$-generators. The application of interest for geometry is the derived pushforward of strong $\bigoplus$-generators remain such along a morphism $\pi \colon Y \to X$ of Noetherian schemes, cf. Remark~\ref{rmk:descent_conditions_for_schemes_big_generation}. The result below reappears later as Theorem~\ref{thm:descent_conditions}.

\begin{introthm}\label{introthm:descent_conditions}
    Suppse $\pi^\ast \colon \mathcal{T} \to \mathcal{S}$ is a tensor-exact functor between tensor triangulated categories that are rigidly-compactly generated. Assume $\pi^\ast$ preserves all small coproducts. Let $\pi_\ast$ be the right adjoint of $\pi^\ast$. If $G$ is a strong $\bigoplus$-generator for $\mathcal{T}$, the following are equivalent:
    \begin{itemize}
        \item $\pi_\ast G$ is a strong $\bigoplus$-generator in $\mathcal{S}$;
        \item $1_\mathcal{S}$ is an object of $\overline{\langle \pi_\ast G \rangle}_n$ in $\mathcal{S}$ for some $n\geq 0$.
    \end{itemize}
    In particular, if $\overline{\langle G \rangle}_l = \mathcal{T}$ and $1_\mathcal{S} \in \overline{\langle \pi_\ast G \rangle}_n$, then $\overline{\langle \pi_\ast G \rangle}_{ln} = \mathcal{S}$.
\end{introthm}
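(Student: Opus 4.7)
The plan is to handle the two directions of the equivalence separately and then harvest the quantitative ``in particular'' bound along the way. The forward direction is essentially tautological: if $\pi_\ast G$ is a strong $\bigoplus$-generator, then $\overline{\langle \pi_\ast G \rangle}_m = \mathcal{S}$ for some $m$, and the unit $1_\mathcal{S}$ in particular belongs to this subcategory. The substance lies in the reverse implication together with the quantitative sharpening, which I would prove simultaneously.

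Fix $l$ with $\overline{\langle G \rangle}_l = \mathcal{T}$, and assume $1_\mathcal{S} \in \overline{\langle \pi_\ast G \rangle}_n$. Let $Y \in \mathcal{S}$ be arbitrary; the goal is to bound the level of $Y$ relative to $\pi_\ast G$ by $ln$. The first move is to tensor the containment for $1_\mathcal{S}$ with $Y$: since $- \otimes Y$ is an exact, coproduct-preserving endofunctor of $\mathcal{S}$ that respects direct summands, it sends $\overline{\langle \pi_\ast G \rangle}_n$ into $\overline{\langle \pi_\ast G \otimes Y \rangle}_n$, placing $Y = 1_\mathcal{S} \otimes Y$ in the latter.

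The second, and key, step invokes the projection formula $\pi_\ast G \otimes Y \simeq \pi_\ast(G \otimes \pi^\ast Y)$. Since $G \otimes \pi^\ast Y$ lies in $\mathcal{T} = \overline{\langle G \rangle}_l$ and $\pi_\ast$ is an additive exact functor that preserves small coproducts, it sends $\overline{\langle G \rangle}_l$ into $\overline{\langle \pi_\ast G \rangle}_l$; hence $\pi_\ast G \otimes Y \in \overline{\langle \pi_\ast G \rangle}_l$. Concatenating the two containments yields
\[
 Y \in \overline{\langle \pi_\ast G \otimes Y \rangle}_n \;\subseteq\; \overline{\langle \overline{\langle \pi_\ast G \rangle}_l \rangle}_n \;\subseteq\; \overline{\langle \pi_\ast G \rangle}_{ln},
\]
where the last step uses the standard multiplicativity of levels under iterated building. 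Since $Y$ was arbitrary, $\overline{\langle \pi_\ast G \rangle}_{ln} = \mathcal{S}$, which simultaneously proves the reverse implication and the refined ``in particular'' claim.

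The two inputs that need justification are (i) the projection formula and (ii) the fact that $\pi_\ast$ preserves small coproducts. Both are standard consequences of the rigidly-compactly generated hypothesis: in such categories, compact coincides with dualizable, and since $\pi^\ast$ is strong monoidal and coproduct-preserving it carries dualizable objects to dualizable objects, hence compacts to compacts. By the usual adjunction argument, this forces $\pi_\ast$ to preserve coproducts and, together with the existence of a dualizable set of generators, yields the projection formula. That setup is the main technical obstacle; once secured, the proof collapses to the short chain of containments above.
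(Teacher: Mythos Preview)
Your proof is correct and follows essentially the same route as the paper: the forward direction is trivial, and for the reverse you tensor the containment $1_\mathcal{S}\in\overline{\langle\pi_\ast G\rangle}_n$ with an arbitrary object, apply the projection formula, push down the level-$l$ generation of $\mathcal{T}$ through $\pi_\ast$, and finish via multiplicativity of levels. The paper packages the tensoring step and the multiplicativity step as separate lemmas (Lemmas~\ref{lem:tensor_big_thick} and~\ref{lem:big_thick_compare}), but the content is identical; if anything, you are slightly more explicit than the paper in justifying why $\pi_\ast$ preserves small coproducts (via $\pi^\ast$ preserving dualizables, hence compacts), a point the paper uses implicitly in writing $\pi_\ast(G\otimes\pi^\ast E)\in\overline{\langle\pi_\ast G\rangle}_l$.
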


The next result provides a method to probe the finiteness of Rouquier dimension for $D^b_{\operatorname{coh}}(X)$ by looking at an open affine cover of $X$. This appears later as Theorem~\ref{thm:finite_rouquier_iff_locally}.

\begin{introthm}\label{introthm:finite_rouquier_iff_locally}
    If $X$ is a Noetherian separated scheme, then the following are equivalent:
    \begin{enumerate}
        \item $D^b_{\operatorname{coh}}(X)$ has finite Rouquier dimension;
        \item $D^b_{\operatorname{coh}}(U)$ has finite Rouquier dimension for all open affine subschemes $U$ in $X$. 
    \end{enumerate}
\end{introthm}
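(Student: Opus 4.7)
For $(1) \Rightarrow (2)$, fix a strong generator $G \in D^b_{\operatorname{coh}}(X)$ with generation time $d$, and let $j \colon U \hookrightarrow X$ be an open affine. Since $X$ is Noetherian, every coherent sheaf on $U$ extends to a coherent sheaf on $X$, so $j^* \colon D^b_{\operatorname{coh}}(X) \to D^b_{\operatorname{coh}}(U)$ realises $D^b_{\operatorname{coh}}(U)$ as a Verdier quotient and is in particular essentially surjective. Given $E \in D^b_{\operatorname{coh}}(U)$, lift to $F \in D^b_{\operatorname{coh}}(X)$ with $j^*F \cong E$; applying the tensor-exact $j^*$ to the building $F \in \overline{\langle G \rangle}_d$ yields $E \in \overline{\langle j^*G \rangle}_d$, so $j^*G$ is a strong generator of $D^b_{\operatorname{coh}}(U)$.

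For the main direction $(2) \Rightarrow (1)$, pick a finite open affine cover $\{U_i\}_{i=1}^n$ of $X$; separatedness guarantees that every intersection $U_I := \bigcap_{i \in I} U_i$ is again affine, and (2) supplies a strong generator $G_i \in D^b_{\operatorname{coh}}(U_i)$. Set $Y := \bigsqcup_i U_i$ (an affine Noetherian scheme) and let $\pi \colon Y \to X$ be the associated affine faithfully-flat covering. Assemble $G_Y := \bigoplus_i \iota_{i*} G_i \in D^b_{\operatorname{coh}}(Y)$; over the affine Noetherian $Y$, a standard filtered-colimit argument (every quasi-coherent module is a filtered colimit of its coherent submodules, hence a homotopy colimit built from them by one cone on a pair of direct sums) promotes $G_Y$ to a strong $\oplus$-generator of $D_{\operatorname{Qcoh}}(Y)$. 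The plan is to invoke Theorem~\ref{introthm:descent_conditions} for $\pi$: the hypothesis $\mathcal{O}_X \in \overline{\langle \pi_* G_Y \rangle}_N$ is verified through the \v{C}ech resolution of $\mathcal{O}_X$ along $\{U_i\}$, using that each $j_{I*} \mathcal{O}_{U_I}$ factors through some $j_{i*}$ of an object that lies in $\overline{\langle G_i \rangle}$ by strong $\oplus$-generation on the affine $U_i$. Theorem~\ref{introthm:descent_conditions} then yields that $\pi_* G_Y = \bigoplus_i j_{i*} G_i$ is a strong $\oplus$-generator of $D_{\operatorname{Qcoh}}(X)$.

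The principal obstacle is that $\pi_* G_Y$ is not coherent on $X$, so it does not directly witness finite Rouquier dimension of $D^b_{\operatorname{coh}}(X)$. The plan to bridge this gap is to replace each $G_i$ by its cohomology decomposition $\bigoplus_k H^k(G_i)[-k]$ (at the cost of a bounded factor in generation time, since the two generate each other through the truncation triangles), extend each coherent $H^k(G_i)$ to a coherent sheaf on $X$ by Grothendieck's extension lemma for coherent sheaves on Noetherian schemes, and form the resulting coherent object $\widetilde{G} := \bigoplus_i \widetilde{G}_i \in D^b_{\operatorname{coh}}(X)$ with $j_i^* \widetilde{G}_i$ equivalent to $G_i$. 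One then shows that $\widetilde{G}$ is a strong generator of $D^b_{\operatorname{coh}}(X)$ by Noetherian induction on support: for any $\mathcal{F} \in D^b_{\operatorname{coh}}(X)$, lift the building $j_1^* \mathcal{F} \in \overline{\langle G_1 \rangle}_{d_1}$ through the Verdier localisation $D^b_{\operatorname{coh}}(X) \to D^b_{\operatorname{coh}}(U_1)$ to an approximation in $\overline{\langle \widetilde{G}_1 \rangle}_{d_1}$ whose error lies in $D^b_{\operatorname{coh}, X \setminus U_1}(X)$, and iterate over the remaining $U_i$ until the residual support becomes $\bigcap_i (X \setminus U_i) = \varnothing$. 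The technical heart is controlling these successive lifts uniformly, so that the total generation time is bounded independently of $\mathcal{F}$.
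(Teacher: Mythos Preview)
Your $(1)\Rightarrow(2)$ is correct and matches the paper. For $(2)\Rightarrow(1)$ there are genuine gaps. The filtered-colimit step only places every \emph{module} in $\overline{\langle G_Y\rangle}_{2n}$; it says nothing uniform about unbounded complexes, so it does not promote $G_Y$ to a strong $\oplus$-generator of $D_{\operatorname{Qcoh}}(Y)$. More seriously, in your fallback plan you assert that a relation $j_1^\ast\mathcal{F}\in\langle G_1\rangle_{d_1}$ lifts through the Verdier localisation to an approximation in $\langle\widetilde{G}_1\rangle_{d_1}$ with error supported on $X\setminus U_1$. Verdier quotients do not preserve level: morphisms in the quotient are roofs, and unwinding a $d_1$-step building back into $D^b_{\operatorname{coh}}(X)$ costs additional cones coming from the denominators, with no a priori bound. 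You flag this uniform control as ``the technical heart'' but supply no mechanism for it.

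The paper provides exactly this mechanism via an ingredient absent from your plan: Neeman's approximability theorem, which guarantees that for an open immersion $s\colon U\hookrightarrow X$ and a compact generator $P\in\operatorname{perf}X$ one has $\mathbb{R}s_\ast\mathcal{O}_U\in\overline{\langle P\rangle}_N$ for some finite $N$. The projection formula then gives $\mathbb{R}s_\ast\mathbb{L}s^\ast\widetilde{G}\cong\widetilde{G}\overset{\mathbb{L}}{\otimes}\mathbb{R}s_\ast\mathcal{O}_U\in\overline{\langle P\overset{\mathbb{L}}{\otimes}\widetilde{G}\rangle}_N$, a uniform coherent replacement for your uncontrolled lifts. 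A Mayer--Vietoris induction on the size of the cover, run inside $D_{\operatorname{Qcoh}}(X)$, yields $E\in\overline{\langle P\overset{\mathbb{L}}{\otimes}\widetilde{G}\rangle}_M$ for every $E\in D^b_{\operatorname{coh}}(X)$ with $M$ independent of $E$, and one finishes via $\overline{\langle H\rangle}_M\cap D^b_{\operatorname{coh}}(X)=\langle H\rangle_M$ for coherent $H$. The strong generator produced is $P\overset{\mathbb{L}}{\otimes}\widetilde{G}$, not $\widetilde{G}$: tensoring with the perfect $P$ is precisely what absorbs the non-coherence of the pushforwards $\mathbb{R}s_\ast\mathcal{O}_U$ and delivers the uniform bound your sketch lacks.
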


An immediate consequence to Theorem~\ref{introthm:descent_conditions} is that in many familiar instances, classical generation is preserved along the derived pushforward of a proper surjective morphism. This mild hypothesis yields the following upgrade, and it is stated later in Theorem~\ref{thm:proper_surjective_descent}.

\begin{introthm}\label{introthm:proper_surjective_descent}
    Suppose $\pi \colon Y\to X$ is a proper surjective morphism where $X$ is a Noetherian $J\textrm{-}2$ scheme of finite Krull dimension. If $G$ is a classical generator for $D^b_{\operatorname{coh}}(Y)$, then $\mathbb{R}\pi_\ast G$ is a classical generator for $D^b_{\operatorname{coh}}(X)$.
\end{introthm}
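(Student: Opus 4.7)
The plan is to reduce the statement to Theorem~\ref{introthm:descent_conditions} by first upgrading the given classical generator $G$ of $D^b_{\operatorname{coh}}(Y)$ to a strong $\bigoplus$-generator of $D_{\operatorname{Qcoh}}(Y)$, and then verifying a descent condition on $\mathcal{O}_X$. This plan makes sense because once $\mathbb{R}\pi_\ast \widetilde{G}$ is a strong $\bigoplus$-generator of $D_{\operatorname{Qcoh}}(X)$ living inside $D^b_{\operatorname{coh}}(X)$, it is automatically a classical generator of $D^b_{\operatorname{coh}}(X)$, and classical generation transfers back to $\mathbb{R}\pi_\ast G$ via containment of thick subcategories.

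Concretely, since $\pi$ is of finite type and the $J\textrm{-}2$ property is preserved by finite type morphisms, $Y$ is again Noetherian $J\textrm{-}2$ of finite Krull dimension. In this regime I can take a strong $\bigoplus$-generator $\widetilde{G} \in D^b_{\operatorname{coh}}(Y)$ of $D_{\operatorname{Qcoh}}(Y)$, available via Aoki's theorem together with the enhancements to the $J\textrm{-}2$ setting worked with in this paper. Because $G$ classically generates $D^b_{\operatorname{coh}}(Y)$, one has $\widetilde{G} \in \langle G \rangle$ and hence $\mathbb{R}\pi_\ast \widetilde{G} \in \langle \mathbb{R}\pi_\ast G \rangle$. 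The derived adjunction $\mathbb{L}\pi^\ast \dashv \mathbb{R}\pi_\ast$ between $D_{\operatorname{Qcoh}}(X)$ and $D_{\operatorname{Qcoh}}(Y)$ satisfies the hypotheses of Theorem~\ref{introthm:descent_conditions}: both sides are rigidly-compactly generated tensor triangulated categories for Noetherian schemes, and $\mathbb{L}\pi^\ast$ is tensor-exact and preserves small coproducts. The remaining task is therefore to produce $n$ with $\mathcal{O}_X \in \overline{\langle \mathbb{R}\pi_\ast \widetilde{G}\rangle}_n$ in $D_{\operatorname{Qcoh}}(X)$.

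This descent step is the expected main obstacle. Picking $m$ with $\mathcal{O}_Y \in \langle \widetilde{G}\rangle_m$ gives $\mathbb{R}\pi_\ast \mathcal{O}_Y \in \overline{\langle \mathbb{R}\pi_\ast \widetilde{G}\rangle}_m$, so it suffices to secure a uniform $N$ with $\mathcal{O}_X \in \overline{\langle \mathbb{R}\pi_\ast \mathcal{O}_Y\rangle}_N$. Three geometric inputs must cooperate: properness keeps the pushforwards bounded coherent, surjectivity forces the Balmer support of $\mathbb{R}\pi_\ast \mathcal{O}_Y$ to equal all of $X$, and finite Krull dimension is what gauges termination. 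The route I would pursue is a Noetherian induction on the support of the cofiber of the unit $\mathcal{O}_X \to \mathbb{R}\pi_\ast \mathcal{O}_Y$: at each stage the residual cofiber is supported on a proper closed subset $Z \subset X$, and multiplying by a section of an ideal cutting out $Z$ yields an $\mathcal{O}_X$-linear endomorphism of $\mathbb{R}\pi_\ast \mathcal{O}_Y$ whose cone strictly trims the support (exactly as happens for the normalization of a cuspidal curve, where the obstruction skyscraper is the cone of multiplication by the uniformizer). Finite Krull dimension bounds the number of such trimming steps, delivering the desired $N$. With this in hand, Theorem~\ref{introthm:descent_conditions} yields that $\mathbb{R}\pi_\ast \widetilde{G}$ is a strong $\bigoplus$-generator of $D_{\operatorname{Qcoh}}(X)$, hence a classical generator of $D^b_{\operatorname{coh}}(X)$, and its membership in $\langle \mathbb{R}\pi_\ast G \rangle$ promotes $\mathbb{R}\pi_\ast G$ to a classical generator as well.
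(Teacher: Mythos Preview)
Your strategy has two genuine gaps, one of which cannot be repaired within the hypotheses of the theorem.

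\textbf{The main gap: strong $\bigoplus$-generation is not available.} You invoke Aoki's theorem to obtain $\widetilde{G}\in D^b_{\operatorname{coh}}(Y)$ that is a strong $\bigoplus$-generator of $D_{\operatorname{Qcoh}}(Y)$, and you write that this is ``available via Aoki's theorem together with the enhancements to the $J\textrm{-}2$ setting worked with in this paper.'' No such enhancement exists here. Aoki's result (Example~\ref{ex:quasi_excellent_strong_oplus}) requires $Y$ to be \emph{quasiexcellent}, and the paper does not extend it beyond that class; for $J\textrm{-}2$ schemes the paper only has the Elagin--Lunts--Schn\"urer classical generator (Example~\ref{ex:j2_scheme_classical_generator}). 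Since $J\textrm{-}2$ is strictly weaker than quasiexcellent, your approach at best proves the theorem under the additional assumption that $X$ (hence $Y$) is quasiexcellent. The paper's argument is deliberately organized to avoid Theorem~\ref{introthm:descent_conditions} and strong $\bigoplus$-generation entirely, precisely so that the broader $J\textrm{-}2$ hypothesis survives.

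\textbf{The secondary gap: the descent step is not a sketch of a proof.} Even granting quasiexcellence, your argument that $\mathcal{O}_X\in\overline{\langle \mathbb{R}\pi_\ast \mathcal{O}_Y\rangle}_N$ does not hold up. You assert that the cofiber of the unit $\mathcal{O}_X\to\mathbb{R}\pi_\ast\mathcal{O}_Y$ is supported on a proper closed $Z\subset X$, but for a general proper \emph{surjective} morphism this is false: take any finite surjection of degree $>1$, or any morphism whose generic fiber has positive dimension. The trimming mechanism you describe (multiplying by a section of an ideal of $Z$) therefore has no starting point. What actually works here is the paper's route: pass to a nonempty regular affine open $U\subset X$ (this is where $J\textrm{-}2$ is used), observe that $\mathbb{R}\pi_\ast G$ has full support because $\pi$ is surjective, invoke \cite[Theorem~1.5]{Neeman:1992} on the regular $U$, and then use Lemma~\ref{lem:generate_generically} together with a dimension induction handled by base change to closed subschemes (Diagrams~\ref{eq:descent_proper_surjective_dim_zero}--\ref{eq:descent_proper_surjective_dim_nonzero}) and Corollary~\ref{cor:strucutre_sheaves_classically_generate}. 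That argument never leaves $D^b_{\operatorname{coh}}$ and never requires a strong $\bigoplus$-generator.
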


If both $D^b_{\operatorname{coh}}(Y)$ and $D^b_{\operatorname{coh}}(X)$ have finite Rouquier dimension, then Theorem~\ref{introthm:proper_surjective_descent} promises the preservation of strong generation. Moreover, Theorem~\ref{introthm:proper_surjective_descent} produces new examples of how to explicitly describe strong generators in practice. For instance, Example~\ref{ex:resolution_char_zero} exhibits strong generation is preserved by the derived pushforward of a resolution of singularities $\pi \colon \widetilde{X}\to X$ for a variety $X$ over a field of characteristic zero. More concretely, if $X$ is a $d$-dimensional complex quasi-projective variety, then there exists a projective resolution of singularities $\pi \colon \widetilde{X}\to X$ \cite{Hironaka:1964a}. If $\mathcal{L}$ is a very ample line bundle on $\widetilde{X}$, then $\mathcal{O}_{\widetilde{X}}\oplus \mathcal{L} \oplus \cdots \oplus \mathcal{L}^{\otimes d}$ is a strong generator for $D^b_{\operatorname{coh}}(\widetilde{X})$ \cite{Orlov:2009}. By Theorem~\ref{introthm:proper_surjective_descent}, we have $\mathbb{R}\pi_\ast (\mathcal{O}_{\widetilde{X}}\oplus \mathcal{L} \oplus \cdots \oplus \mathcal{L}^{\otimes d})$ is a strong generator for $D^b_{\operatorname{coh}}(X)$.

Another utility of Theorem~\ref{introthm:proper_surjective_descent} is it yields new techniques to obtaining upper bounds on the Rouquier dimension of an affine scheme. For example, the Rouquier dimension of $D^b_{\operatorname{coh}}(X)$ for $X$ an affine variety over a field of characteristic zero can be bounded above by a multiple of the Krull dimension of $X$ and level of $\mathcal{O}_X$ with respect to a strong generator. For further details, see Remark~\ref{rmk:effective_bound_rouquier_dimension_affine_variety}. Additionally, when $X$ is a derived splinter, the Rouquier dimension of $D^b_{\operatorname{coh}}(X)$ can be expressed only in terms of Krull dimension of $X$ (see Example~\ref{ex:splinter_rouquier_bound}). The following result is Corollary~\ref{cor:rouquir_dimension_bounds_affine_variety}.

\begin{introcor}\label{introcor:rouquir_dimension_bounds_affine_variety}
    Suppose $X$ is an affine Noetherian scheme, and $\pi\colon \widetilde{X} \to X$ is a proper surjective morphism. If both $D^b_{\operatorname{coh}}(X)$ and $D^b_{\operatorname{coh}}(\widetilde{X})$ have finite Rouquier dimension, then 
    \begin{displaymath}
        \dim D^b_{\operatorname{coh}}(X)\leq (g  + 1) \cdot \operatorname{level} ^{\mathbb{R}\pi_\ast G} (\mathcal{O}_X)
    \end{displaymath}
    for any strong generator $G\in D^b_{\operatorname{coh}}(\widetilde{X})$ with generation time $g$.
\end{introcor}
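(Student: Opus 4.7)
The plan is to combine Theorem~\ref{introthm:descent_conditions} with the bridge between $D_{\operatorname{Qcoh}}$ and $D^b_{\operatorname{coh}}$ highlighted in the introduction, exploiting the affineness of $X$ to identify the tensor unit $1_{D_{\operatorname{Qcoh}}(X)}$ with $\mathcal{O}_X$. In outline: promote $G$ from a strong generator in $D^b_{\operatorname{coh}}(\widetilde{X})$ to a strong $\bigoplus$-generator in $D_{\operatorname{Qcoh}}(\widetilde{X})$ with controlled exponent, transfer to $D_{\operatorname{Qcoh}}(X)$ via Theorem~\ref{introthm:descent_conditions} by taking the second input to be $n = \operatorname{level}^{\mathbb{R}\pi_\ast G}(\mathcal{O}_X)$, and then descend back to $D^b_{\operatorname{coh}}(X)$.

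First I would verify the setup required by Theorem~\ref{introthm:descent_conditions}: both $D_{\operatorname{Qcoh}}(\widetilde{X})$ and $D_{\operatorname{Qcoh}}(X)$ are rigidly-compactly generated tensor triangulated categories (as $X$ and $\widetilde{X}$ are qcqs Noetherian), and the derived pullback $\mathbb{L}\pi^\ast$ is tensor-exact and preserves small coproducts, with right adjoint $\mathbb{R}\pi_\ast$. Affineness of $X$ identifies $\mathcal{O}_X$ with $1_{D_{\operatorname{Qcoh}}(X)}$, so the integer $\operatorname{level}^{\mathbb{R}\pi_\ast G}(\mathcal{O}_X)$ is, by its very definition, an $n$ witnessing $1_{D_{\operatorname{Qcoh}}(X)} \in \overline{\langle \mathbb{R}\pi_\ast G \rangle}_n$.

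Next I would upgrade $G$ to a strong $\bigoplus$-generator of $D_{\operatorname{Qcoh}}(\widetilde{X})$. Since $D^b_{\operatorname{coh}}(\widetilde{X})$ has finite Rouquier dimension and $G$ is a strong generator there with generation time $g$, the correspondence used by Aoki (inherited from Bondal--Van den Bergh, Rouquier, and Neeman) produces $\overline{\langle G \rangle}_{g+1} = D_{\operatorname{Qcoh}}(\widetilde{X})$. Feeding both inputs into Theorem~\ref{introthm:descent_conditions} then yields
\begin{equation*}
    \overline{\langle \mathbb{R}\pi_\ast G \rangle}_{(g+1)\cdot \operatorname{level}^{\mathbb{R}\pi_\ast G}(\mathcal{O}_X)} = D_{\operatorname{Qcoh}}(X),
\end{equation*}
so $\mathbb{R}\pi_\ast G$ is a strong $\bigoplus$-generator of $D_{\operatorname{Qcoh}}(X)$ with the asserted exponent.

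Finally, since $\pi$ is proper and $G \in D^b_{\operatorname{coh}}(\widetilde{X})$, the pushforward $\mathbb{R}\pi_\ast G$ lies in $D^b_{\operatorname{coh}}(X)$. The reverse direction of the Aoki-style bridge then concludes that $\mathbb{R}\pi_\ast G$ is a strong generator for $D^b_{\operatorname{coh}}(X)$ with generation time at most $(g+1)\cdot \operatorname{level}^{\mathbb{R}\pi_\ast G}(\mathcal{O}_X)$, giving the stated bound on $\dim D^b_{\operatorname{coh}}(X)$. The chief obstacle is the numerical bookkeeping: aligning the factor $g+1$ with the paper's convention for $\overline{\langle - \rangle}_n$ and ensuring that the interplay between $D^b_{\operatorname{coh}}$ and $D_{\operatorname{Qcoh}}$ introduces no extra additive slack, both of which are controlled by the hypothesis that $D^b_{\operatorname{coh}}(X)$ and $D^b_{\operatorname{coh}}(\widetilde{X})$ have finite Rouquier dimension.
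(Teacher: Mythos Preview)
Your plan has a genuine gap at the step where you ``upgrade'' $G$ from a strong generator of $D^b_{\operatorname{coh}}(\widetilde{X})$ to a strong $\bigoplus$-generator of $D_{\operatorname{Qcoh}}(\widetilde{X})$ with the \emph{same} exponent $g+1$. The bridge in Remark~\ref{rmk:strong_oplus_to_strong_gerator_time_bound} only runs in one direction: from $\overline{\langle G\rangle}_n = D_{\operatorname{Qcoh}}(\widetilde{X})$ one deduces $\langle G\rangle_n = D^b_{\operatorname{coh}}(\widetilde{X})$. Aoki's theorem furnishes the \emph{existence} of some coherent strong $\oplus$-generator, but it does not say that an arbitrary strong generator of $D^b_{\operatorname{coh}}(\widetilde{X})$ with generation time $g$ satisfies $\overline{\langle G\rangle}_{g+1}=D_{\operatorname{Qcoh}}(\widetilde{X})$. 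Without that equality you cannot feed the pair $(l,n)=(g+1,\operatorname{level}^{\mathbb{R}\pi_\ast G}(\mathcal{O}_X))$ into Theorem~\ref{thm:descent_conditions} and extract the stated product bound. (Your route is essentially Proposition~\ref{prop:descend_bounds}, whose hypothesis is precisely that $G$ be a strong $\oplus$-generator of the \emph{big} category; the corollary is not derived from that proposition for exactly this reason.) A secondary point: $\mathcal{O}_X$ is the tensor unit of $D_{\operatorname{Qcoh}}(X)$ for any scheme, so affineness is not doing the work you attribute to it.

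The paper sidesteps the small-to-big passage entirely and argues inside $D^b_{\operatorname{coh}}(X)$ via coghosts. Given an arbitrary $c$-fold $\mathbb{R}\pi_\ast G$-coghost composite $E_c\to E_0$, one replaces it (using the approximation of \cite[\S2.8]{Letz:2021}) by a parallel coghost chain through \emph{perfect} complexes $P_c\to\cdots\to P_1\to E_0$ whose vanishing is equivalent. For each perfect $P$ the projection formula gives $P\in\langle\mathbb{R}\pi_\ast G\rangle_n\overset{\mathbb{L}}{\otimes}P\subseteq\langle\mathbb{R}\pi_\ast(G\overset{\mathbb{L}}{\otimes}\mathbb{L}\pi^\ast P)\rangle_n$, and since $G\overset{\mathbb{L}}{\otimes}\mathbb{L}\pi^\ast P\in D^b_{\operatorname{coh}}(\widetilde{X})=\langle G\rangle_{g+1}$ one obtains $\operatorname{level}^{\mathbb{R}\pi_\ast G}(P)\le (g+1)n$ uniformly. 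Hence any $(g+1)n$-fold coghost map with perfect source vanishes, and the affine hypothesis enters through \cite[Theorem 24]{OS:2012}, which identifies level with coghost index in $D^b_{\operatorname{mod}}(R)$ and yields $E_0\in\langle\mathbb{R}\pi_\ast G\rangle_{(g+1)n+1}$. That is where affineness is actually used, and it is what replaces the unavailable small-to-big implication in your outline.
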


\begin{ack}
    The author would like to thank Matthew R. Ballard, Andres J. Fernandez Herrero, Alapan Mukhopadhyay, and the referee for their valuable comments shared that led to improvements on the earlier versions of this work.
\end{ack}

\begin{addendum}
    The author would like to add comments regarding two important generalizations made since the announcement of this work. The recent work of \cite{Dey/Lank:2024} has vastly generalized Theorem~\ref{thm:proper_surjective_descent} to a similar statement for a proper surjective morphism of Noetherian schemes. Moreover, recent development regarding Rouquier dimension utilizing approximation by perfect complexes in \cite{Lank/Olander:2024} has produced further generalizations of many results in Section~\ref{sec:bounds_on_rouquier_dimension}.
\end{addendum}

\begin{notation}
    Let $X$ be a Noetherian scheme. We will be primarily interested in the following triangulated categories:
    \begin{enumerate}
        \item $D_{\operatorname{Qcoh}}(X)$ is the derived category of complexes of abelian sheaves on $X$
        \item $D_{\operatorname{Qcoh}}(X)$ is the triangulated subcategory of $D_{\operatorname{Qcoh}}(X)$ consisting of complexes that have quasi-coherent cohomology
        \item $D^b_{\operatorname{coh}}(X)$ is the triangulated subcategory consisting of bounded complexes with coherent cohomology.
    \end{enumerate}
\end{notation}

\section{Generation}
\label{sec:generation}

This section recalls notions of generation in triangulated categories, and the
primary interests are triangulated categories that can be constructed from
quasi-coherent sheaves on a Noetherian scheme. A few standard references
regarding triangulated categories and generation are respectively
\cite{Krause:2022,Huybrechts:2006} and
\cite{BVdB:2003,Rouquier:2008,Neeman:2021}. Let $\mathcal{T}$ be a triangulated
category with shift functor $[1]\colon \mathcal{T}\to\mathcal{T}$. To start, we remind ourselves of the classes of schemes that is of interest to our work.

\begin{definition}
    Suppose $R$ is a Noetherian ring, and denote the regular locus of $R$ by $\operatorname{reg}(R)$. We say $R$ is  
    \begin{enumerate}
        \item \textbf{$J\textrm{-}0$} when $\operatorname{reg}(R)$ contains a nonempty open subset of $\operatorname{Spec}(R)$;
        \item \textbf{$J\textrm{-}1$} when $\operatorname{reg}(R)$ is open in $\operatorname{Spec}(R)$;
        \item \textbf{$J\textrm{-}2$} when $S$ is $J\textrm{-}1$ for every $R$-algebra $S$ that is of finite type.
    \end{enumerate}
    If $X$ is a Noetherian scheme and for some $t=0,1,2$ there exists an open affine cover of $X$ by $J\textrm{-}t$ rings, then $X$ is said to be \textbf{$J\textrm{-}t$}. It can be verified that $X$ is $J\textrm{-}2$ if any scheme locally of finite type over $X$ is $J\textrm{-}1$ as well. See \cite[$\S 32$]{Matsumura:1989} for further details.
\end{definition}

\begin{example}
    The following highlight interesting examples of the above notions. All rings below are assumed to be Noetherian.
    \begin{enumerate}
        \item A non-reduced Artinian local ring has empty regular locus, so it is not $J\textrm{-}0$, but such rings are $J\textrm{-}1$. 
        \item There is an example of a $1$-dimensional integral domain that is not $J\textrm{-}0$ \cite[Example 1]{Hochster:1973}.
        \item Any local isolated singularity of nonzero Krull dimension or regular ring is $J\textrm{-}1$.
        \item Every $J\textrm{-}1$ integral domain is $J\textrm{-}0$.
        \item There exists a $3$-dimensional local integral domain that is $J\textrm{-}0$ but not $J\textrm{-}1$ \cite[Proposition 3.5]{Ferrand/Raynaud:1970};
        \item Any $J\textrm{-}2$ ring is $J\textrm{-}1$. This includes quasiexcellent, Artinian, $1$-dimensional local rings, and $1$-dimensional Nagata rings.
    \end{enumerate}
\end{example}

\begin{definition}
    If $R$ is a Noetherian ring, then we say $R$ is
    \begin{enumerate}
        \item \textbf{quasiexcellent} when it is both $J\textrm{-}2$ and a $G$-ring;
        \item \textbf{excellent} when it is quasiexcellent and universally catenary.
    \end{enumerate} 
    A Noetherian scheme is said to be \textbf{excellent} (resp. \textbf{quasiexcellent}) if it admits an open affine cover of such rings.
\end{definition}

\begin{example}
    The following are examples of quasiexcellent rings. Furthermore, any essentially of finite type algebra over these are also quasiexcellent. For instance, Dedekind domains with fraction field of characteristic zero or Noetherian complete local rings. Any Noetherian scheme which is of finite type over such rings are quasiexcellent, and note that this includes varieties over a field.
\end{example}

\begin{definition}(\cite{BVdB:2003,Rouquier:2008})
    Let $\mathcal{S}$ be a subcategory of $\mathcal{T}$.
    \begin{enumerate}
        \item A full triangulated subcategory of $\mathcal{T}$ is \textbf{thick} when it is closed under retracts. The smallest thick subcategory in $\mathcal{T}$ containing $\mathcal{S}$ is denoted $\langle \mathcal{S} \rangle$.
        \item Let $\langle G \rangle_0$ be the full subcategory consisting of all objects that are isomorphic to the zero object, and set $\langle G \rangle_1$ to be the full subcategory containing $G$ closed under shifts and retracts of finite coproducts. If $n\geq 2$, then $\langle G \rangle_n$ denotes the full subcategory of objects which are retracts of an object $E$ appearing in a distinguished triangle
        \begin{displaymath}
            A \to E \to B \to A[1]
        \end{displaymath}
        where $A\in \langle G \rangle_{n-1}$ and $B\in
        \langle G \rangle_1$. 
    \end{enumerate}
\end{definition}

\begin{remark}
    There exists an exhaustive filtration:
    \begin{displaymath}
        \langle G \rangle_0 \subseteq \langle G \rangle_1 \subseteq \cdots \subseteq \bigcup_{n=0}^\infty \langle G \rangle_n = \langle G \rangle.
    \end{displaymath}
\end{remark}

\begin{definition}(\cite{Rouquier:2008,BVdB:2003})
    \begin{enumerate}
        \item If $E\in \langle G \rangle$, then we say that $G$ \textbf{finitely builds} $E$.
        \item If there exists $G\in \mathcal{T}$ such that $\langle G \rangle = \mathcal{T}$, then $G$ is called a \textbf{classical generator}. Additionally, when there exists an $n\geq 0$ such that $\langle G \rangle_n = \mathcal{T}$, then $G$ is called a \textbf{strong generator}.
    \end{enumerate}
\end{definition}

\begin{example}
    Let $X$ be a Noetherian scheme. An object $P$ in $D^b_{\operatorname{coh}}(X)$ is said to be \textbf{perfect} if $P$ is there exists an open affine cover of $X$ such that on each component $P$ quasi-isomorphic to a bounded complex of finite free modules. The collection of perfect complexes in $D^b_{\operatorname{coh}}(X)$ is denoted $\operatorname{perf}X$. By \cite[Corollary 3.1.2]{BVdB:2003}, there exists $P\in \operatorname{perf}X$ such that $\operatorname{perf}X =\langle P \rangle$, and any such object is called a \textbf{compact generator}.
\end{example}

\begin{example}\label{ex:j2_scheme_classical_generator}
    If $X$ is a Noetherian $J\textrm{-}2$ scheme, then $D^b_{\operatorname{coh}}(X)$ admits a classical generator. This is \cite[Theorem 4.15]{Elagin/Lunts/Schnurer:2020}. 
\end{example}

\begin{example}\label{ex:quasi_excellent_strong_generator}
    If $X$ is a Noetherian quasiexcellent separated scheme of finite Krull
    dimension, then there exists a strong generator in $D^b_{\operatorname{coh}}(X)$, cf. \cite[Main Theorem]{Aoki:2021}. This class of schemes includes varieties over a field \cite[Theorem 7.38]{Rouquier:2008}.
\end{example}

\begin{example}
    Suppose $X$ is a Noetherian scheme of prime characteristic $p$. Recall the Frobenius morphism $F\colon X \to X$ is defined on sections by $r \mapsto r^p$. If this morphism is finite, then $D^b_{\operatorname{coh}}(X)$ admits a classical generator \cite[Corollary 3.9]{BILMP:2023}. Additionally, if $X$ is separated, then $D^b_{\operatorname{coh}}(X)$ admits a strong generator. For instance, if $X$ is a quasi-projective variety over a perfect field of prime characteristic and $\mathcal{L}$ is a very ample line bundle on $X$, then $F_\ast^e (\bigoplus^{\dim X}_{i=0} \mathcal{L}^{\otimes i})$ is a strong generator for $D^b_{\operatorname{coh}}(X)$.
\end{example}

\begin{definition}(\cite{Rouquier:2008, ABIM:2010})
    Let $E,G,K\in\mathcal{T}$. If $E\in \langle G \rangle$, then the \textbf{level} of $E$ with respect to $G$ is defined to be the minimal $n$ such that $E\in \langle G \rangle_n$, and it is denoted by $\operatorname{level} ^G(E)$. This value measures the minimal number of required cones to finitely build $E$ from $G$. If $K\in \mathcal{T}$ is a strong generator, then the \textbf{generation time} of $K$ is the minimal $n$ such that $\operatorname{level} ^K (E)\leq n+1$ for all $E\in \mathcal{T}$. The \textbf{Rouquier dimension} of $\mathcal{T}$ is the smallest integer $d$ such that $\langle G \rangle_{d+1} = \mathcal{T}$ for some $G\in \mathcal{T}$, which is denoted $\dim \mathcal{T}$.
\end{definition}

\begin{example}
    If $R$ is a Noetherian commutative ring and $E\in \operatorname{mod}R$, then 
    \begin{displaymath}
        \operatorname{level} ^R (E) = \operatorname{pdim}_R (E) +1
    \end{displaymath}
    where $\operatorname{pdim}_R$ denotes projective dimension of the $R$-module \cite[$\S 8$]{Christensen:1998}.
\end{example}

\begin{example}
    Let $R$ be an Artinian ring and denote by $\operatorname{Jac}(R)$ for its Jacobson radical. Then $R/\operatorname{Jac}(R)$ is a strong generator for  $D^b_{\operatorname{mod}} (R)$ with generation time at most the Loewy length of $R$ \cite[Proposition 7.37]{Rouquier:2008}.
\end{example}

\begin{definition}
    A map $f\colon J \to E$ is said to be \textbf{$G$-coghost} when the induced map on hom-sets
    \begin{displaymath}
        \operatorname{Hom}  (f,G) \colon \operatorname{Hom}  (E,G[n]) \to \operatorname{Hom}  (J,G[n])
    \end{displaymath}
    vanishes for all $n\in \mathbb{Z}$. An \textbf{$n$-fold $G$-coghost map} is the composition of $n$ $G$-coghost maps. The \textbf{coghost index} of $E$ with respect to $G$, denoted $\operatorname{cogin} ^G (E)$, is the minimal $n$ such that any $N$-fold $G$-coghost map $f\colon J \to E$ vanishes when $N\geq n$. There exists a notion of \textbf{ghost maps} and \textbf{ghost index} in $\mathcal{T}$, which is essentially the dual to this discussion, but it is not important for the results of this work.
\end{definition}
    
\begin{remark}
    In the special case where $\mathcal{T}=D^b_{\operatorname{mod}} (R)$ for a Noetherian commutative ring $R$,
    \begin{displaymath}
        \operatorname{level} ^G (E) = \operatorname{cogin} ^G (E).
    \end{displaymath}
    This is \cite[Theorem 24]{OS:2012}, and the reader is encouraged to see \cite{OS:2012, Letz:2021} for further details.
\end{remark}

\begin{definition}(\cite{Neeman:2021,Rouquier:2008})
    Assume $\mathcal{T}$ admits all small coproducts. Let $G$ be an object of $\mathcal{T}$. 
    \begin{enumerate}
        \item $\overline{\langle G \rangle}_0$ is the full subcategory consisting of all objects isomorphic to the zero object, and set $\overline{\langle G \rangle}_1$ be the smallest full subcategory containing $G$ closed under shifts and retracts of small coproducts. 
        \item If $n\geq 2$, then $\overline{\langle G \rangle}_n$ denotes the full subcategory of objects which are retracts of an object $E$ appearing in a distinguished triangle
        \begin{displaymath}
            A \to E \to B \to A[1]
        \end{displaymath}
        where $A\in \overline{\langle G \rangle}_{n-1}$ and $B\in
        \overline{\langle G \rangle}_1$.
        \item An object $G$ of $\mathcal{T}$ is a \textbf{strong $\bigoplus$-generator} if there exists an $n\geq 0$ such that $\overline{\langle G \rangle}_n = \mathcal{T}$.
    \end{enumerate}
\end{definition}

\begin{example}\label{ex:quasi_excellent_strong_oplus}
    If $X$ is a Noetherian quasiexcellent separated scheme of finite Krull dimension, then there exists $G\in D^b_{\operatorname{coh}}(X)$ which is a strong $\bigoplus$-generator for $D_{\operatorname{Qcoh}}(X)$ \cite[Main Theorem]{Aoki:2021}.
\end{example}

\begin{remark}\label{rmk:strong_oplus_to_strong_gerator_time_bound}
    Let $X$ be a Noetherian scheme. For any $G\in D^b_{\operatorname{coh}}(X)$ and integer $n\geq 0$, the following can be deduced from the proofs of Lemma 2.4-2.6 in \cite{Neeman:2021}:
    \begin{displaymath}
        \overline{ \langle G \rangle}_n \cap D^b_{\operatorname{coh}}(X) = \langle G \rangle_n.
    \end{displaymath}
    Consequently, if $\overline{ \langle G \rangle}_n = D_{\operatorname{Qcoh}}(X)$, then $D^b_{\operatorname{coh}}(X) = \langle G \rangle_n$.
\end{remark}

\begin{definition}(\cite{Verdier:1996})
    A sequence of triangulated categories 
    \begin{displaymath}
        \mathcal{S} \xrightarrow{i} \mathcal{T} \xrightarrow{q} \mathcal{V}
    \end{displaymath}   
    is called a \textbf{Verdier localization sequence} when $q \circ i$ is zero, $i$ is fully faithful, and the induced functor from the Verdier quotient $\mathcal{T} / \operatorname{im}(i)$ to $\mathcal{V}$ is an equivalence where $\operatorname{im}(i)$ is the essential image of $i$. Under these conditions, we say that $q$ is a \textbf{Verdier localization (functor)} and $\mathcal{V}$ is a \textbf{Verdier localization} of $\mathcal{T}$.
\end{definition}

\begin{remark}
    Let $X$ be a Noetherian scheme and $i \colon Z \to X$ a closed immersion.
    \begin{enumerate}
        \item If $H \in D_{\operatorname{Qcoh}}(X)$, then the \textbf{support} of $H$ is given by the subset
        \begin{displaymath}
            \operatorname{supp} (H) \colon = \lbrace p \in X \mid \bigoplus_{n \in 
        \mathbb{Z}}	\mathcal H^n(H)_p \neq 0 \rbrace.
        \end{displaymath}
        \item We say an object $H$ in $D_{\operatorname{Qcoh}}(X)$ is \textbf{supported on} $Z$ whenever $\operatorname{supp} (H) \subseteq Z$. If  $H$ is an object of $D^b_{\operatorname{coh}}(X)$, then $\operatorname{supp}(H) \subseteq X$ is closed, and $\operatorname{supp} (H)$ is viewed as a closed subscheme in $X$ via the reduced induced subscheme structure.
        \item The full subcategory consisting of objects in $D_{\operatorname{Qcoh}}(X)$ (resp. $D^b_{\operatorname{coh}}(X)$) that are supported on $Z$ is denoted $D_{\operatorname{Qcoh},Z} (X)$ (resp. $D^b_{\operatorname{coh},Z}(X)$).
    \end{enumerate}
\end{remark}

\begin{definition}
    Suppose $i\colon Z\to X$ is a closed immersion. If $E\in D^b_{\operatorname{coh}}(X)$, then we say $E$ is \textbf{scheme-theoretically supported} on $Z$ when there exists $E^\prime \in D^b_{\operatorname{coh}}(Z)$ such that $i_\ast E^\prime$ is quasi-isomorphic to $E$ in $D^b_{\operatorname{coh}}(X)$.
\end{definition}

\begin{remark}
	Suppose $Z$ is a closed subscheme of Noetherian $X$ with corresponding ideal
	sheaf $\mathcal{I}$. For $n\geq 1$, let $Z_n$ be the closed
	subscheme of $X$ corresponding to the ideal sheaf $\mathcal{I}^n$. If $C \in
	D^b_{\operatorname{coh}}( X)$ is supported on $Z$, then there is an $n\geq 1$ such that $C$ is scheme-theoretically supported on $Z_n$. This is \cite[Lemma 7.40]{Rouquier:2008}, and note that separatedness was initially assumed, but is not necessary.
\end{remark}

\begin{example}
    Let $X$ be a Noetherian scheme. If $j\colon U \to X$ denotes an open immersion, then there exists a Verdier localization sequence
    \begin{displaymath}
        D^b_{\operatorname{coh},Z}(X) \to D^b_{\operatorname{coh}}(X)\xrightarrow{\mathbb{L}j^\ast} D^b_{\operatorname{coh}}(U)
    \end{displaymath}
    where $Z= X \setminus U$. A reference for proof is \cite[Theorem 4.4]{Elagin/Lunts/Schnurer:2020}.
\end{example}

\begin{example}
    Let $X$ be a Noetherian scheme. Suppose $j\colon U \to X$ is an open immersion. Then there exists a Verdier localization sequence
    \begin{displaymath}
        D_{\operatorname{Qcoh},Z} (X) \to D_{\operatorname{Qcoh}}(X)\xrightarrow{\mathbb{L}j^\ast} D_{\operatorname{Qcoh}} (U)
    \end{displaymath}
    where $Z= X \setminus U$. This is an application of \cite[Lemma 3.4]{Rouquier:2010}
\end{example}

\section{Big generation descent}
\label{sec:big_descent}

This section studies the behavior of strong $\oplus$-generation by a tensor-exact functor between rigidly-compactly generated tensor triangulated categories, cf. Remark~\ref{rmk:tt_triangulated_categories} for background. The following result establishes a condition for when strong $\oplus$-generation is preserved.

\begin{theorem}\label{thm:descent_conditions}
    Suppse $\pi^\ast \colon \mathcal{T} \to \mathcal{S}$ is a tensor-exact functor between tensor triangulated categories that are rigidly-compactly generated. Assume $\pi^\ast$ preserves all small coproducts. Let $\pi_\ast$ be the right adjoint of $\pi^\ast$. If $G$ is a strong $\bigoplus$-generator for $\mathcal{T}$, the following are equivalent:
    \begin{itemize}
        \item $\pi_\ast G$ is a strong $\bigoplus$-generator in $\mathcal{S}$;
        \item $1_\mathcal{S}$ is an object of $\overline{\langle \pi_\ast G \rangle}_n$ in $\mathcal{S}$ for some $n\geq 0$.
    \end{itemize}
    In particular, if $\overline{\langle G \rangle}_l = \mathcal{T}$ and $1_\mathcal{S} \in \overline{\langle \pi_\ast G \rangle}_n$, then $\overline{\langle \pi_\ast G \rangle}_{ln} = \mathcal{S}$.
\end{theorem}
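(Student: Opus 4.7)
The plan is to prove the biconditional in two steps, after which the quantitative ``in particular'' claim drops out of the argument for the harder direction. The forward implication is immediate: if $\pi_\ast G$ is a strong $\bigoplus$-generator of $\mathcal{S}$, then $\overline{\langle \pi_\ast G \rangle}_n = \mathcal{S}$ for some $n\geq 0$, so in particular $1_\mathcal{S}$ lies in this subcategory.

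For the converse, I would first pin down two structural consequences of the standing hypotheses. Because $\mathcal{T}$ is rigidly compactly generated, the compact and dualizable objects coincide; since $\pi^\ast$ is tensor-exact with $\pi^\ast 1_\mathcal{T} = 1_\mathcal{S}$, it carries dualizable objects to dualizable objects, hence preserves compact objects. A standard Brown-representability/adjoint-functor argument then shows that the right adjoint $\pi_\ast$ preserves all small coproducts. Second, the tensor-exactness of $\pi^\ast$ combined with rigidity delivers the \emph{projection formula}
\[
\pi_\ast(A) \otimes B \;\cong\; \pi_\ast\bigl(A \otimes \pi^\ast B\bigr),
\]
natural in $A \in \mathcal{T}$ and $B \in \mathcal{S}$. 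These two facts are the workhorses of the rest of the argument.

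Now assume $1_\mathcal{S} \in \overline{\langle \pi_\ast G \rangle}_n$ and $\overline{\langle G \rangle}_l = \mathcal{T}$. Fix any $E \in \mathcal{S}$. Tensoring the filtration realizing $1_\mathcal{S} \in \overline{\langle \pi_\ast G \rangle}_n$ with $E$ and using that $(-)\otimes E$ is an exact, coproduct-preserving endofunctor, I obtain
\[
E \;\cong\; 1_\mathcal{S} \otimes E \;\in\; \overline{\langle \pi_\ast G \otimes E \rangle}_n.
\]
By the projection formula, $\pi_\ast G \otimes E \cong \pi_\ast(G \otimes \pi^\ast E)$. Since $G$ is a strong $\bigoplus$-generator of $\mathcal{T}$, the object $G \otimes \pi^\ast E$ lies in $\overline{\langle G \rangle}_l$. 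The triangulated, coproduct-preserving functor $\pi_\ast$ then sends this into $\overline{\langle \pi_\ast G \rangle}_l$. Composing the two filtrations produces $E \in \overline{\langle \pi_\ast G \rangle}_{ln}$, establishing both the converse and the explicit bound.

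The main obstacle is the clean deployment of the two structural ingredients---the projection formula and the coproduct preservation of $\pi_\ast$---since both are entirely formal in rigidly compactly generated tt-categories but are precisely what allow one to transport the strong-generation data of $G$ across the adjunction. Once they are in hand, the filtration bookkeeping that delivers $\overline{\langle \pi_\ast G \rangle}_{ln} = \mathcal{S}$ is essentially mechanical.
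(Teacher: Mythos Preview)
Your proposal is correct and follows essentially the same route as the paper's own proof. The paper isolates the three auxiliary facts you use---that $\overline{\langle H\rangle}_j \otimes E \subseteq \overline{\langle H\otimes E\rangle}_j$, that exact coproduct-preserving functors send $\overline{\langle G\rangle}_l$ into $\overline{\langle FG\rangle}_l$, and that $H\in\overline{\langle G\rangle}_l$ implies $\overline{\langle H\rangle}_n\subseteq\overline{\langle G\rangle}_{ln}$---as separate lemmas, and it records the projection formula and coproduct-preservation of $\pi_\ast$ by citing \cite[Corollary~2.14]{Balmer/Dell'Ambrogio/Sanders:2016}, whereas you sketch why $\pi^\ast$ preserves compacts and hence $\pi_\ast$ preserves coproducts; but the actual argument---tensor $1_{\mathcal S}\in\overline{\langle\pi_\ast G\rangle}_n$ by $E$, apply the projection formula, use strong $\bigoplus$-generation upstairs, push down, and compose the filtrations---is identical.
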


There is a special case where Theorem~\ref{thm:descent_conditions} can be applied in the context of schemes, cf. Remark~\ref{rmk:descent_conditions_for_schemes_big_generation}. The next result provides a method to probe the finiteness of Rouquier dimension for $D^b_{\operatorname{coh}}(X)$ by looking at an open affine cover of $X$.

\begin{theorem}\label{thm:finite_rouquier_iff_locally}
    If $X$ is a Noetherian separated scheme, then the following are equivalent:
    \begin{enumerate}
        \item $D^b_{\operatorname{coh}}(X)$ has finite Rouquier dimesion;
        \item $D^b_{\operatorname{coh}}(U)$ has finite Rouquier dimesion for all open affine subschemes $U$ in $X$.
    \end{enumerate}
\end{theorem}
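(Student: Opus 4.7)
The plan is to handle the two directions separately, with the forward implication being essentially formal and the converse reduced via induction on cover size to a Mayer--Vietoris-style gluing lemma.

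For $(1)\Rightarrow(2)$, I would use that for any open immersion $j\colon U\hookrightarrow X$ the example at the end of Section~\ref{sec:generation} provides a Verdier localization sequence $D^b_{\operatorname{coh},X\setminus U}(X)\to D^b_{\operatorname{coh}}(X)\xrightarrow{\mathbb{L}j^*}D^b_{\operatorname{coh}}(U)$. Since $\mathbb{L}j^*$ is essentially surjective, triangulated, and preserves direct summands and finite coproducts, if $\langle G\rangle_n=D^b_{\operatorname{coh}}(X)$ then $\langle \mathbb{L}j^*G\rangle_n=D^b_{\operatorname{coh}}(U)$, so (2) follows with generation time bounded by that on $X$.

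For the converse, the plan is to induct on the size $N$ of a finite open affine cover $X=U_1\cup\cdots\cup U_N$, which exists by quasi-compactness. The base $N=1$ is immediate. In the step, set $U=U_N$ and $V=U_1\cup\cdots\cup U_{N-1}$. The open subscheme $V$ is Noetherian separated and its open affines are open affines of $X$, so (2) is inherited and by induction $D^b_{\operatorname{coh}}(V)$ has finite Rouquier dimension; similarly $V\cap U=\bigcup_{i<N}(U_i\cap U)$ has an open affine cover of size $N-1$ (each $U_i\cap U$ being affine by separatedness of $X$), so by induction $D^b_{\operatorname{coh}}(V\cap U)$ has finite Rouquier dimension as well. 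The induction then reduces the problem to a two-chart gluing lemma: for $X=U\cup V$ an open cover of a Noetherian separated scheme with $D^b_{\operatorname{coh}}(U)$, $D^b_{\operatorname{coh}}(V)$, $D^b_{\operatorname{coh}}(U\cap V)$ all of finite Rouquier dimension, $D^b_{\operatorname{coh}}(X)$ is too. With $Z=X\setminus U\subseteq V$, I would combine the two Verdier localization sequences $D^b_{\operatorname{coh},Z}(X)\to D^b_{\operatorname{coh}}(X)\to D^b_{\operatorname{coh}}(U)$ and $D^b_{\operatorname{coh},Z}(V)\to D^b_{\operatorname{coh}}(V)\to D^b_{\operatorname{coh}}(V\cap U)$ together with the closed-support equivalence $D^b_{\operatorname{coh},Z}(X)\simeq D^b_{\operatorname{coh},Z}(V)$; lifting strong generators across the quotients and applying Rouquier's additive estimate $\dim\mathcal{B}\leq \dim\mathcal{A}+\dim\mathcal{C}+1$ for a Verdier localization $\mathcal{A}\to\mathcal{B}\to\mathcal{C}$ then assembles a strong generator of $D^b_{\operatorname{coh}}(X)$.

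The main obstacle will be producing a strong generator of the kernel $D^b_{\operatorname{coh},Z}(V)$ from strong generators of the middle $D^b_{\operatorname{coh}}(V)$ and quotient $D^b_{\operatorname{coh}}(V\cap U)$: this is the \emph{reverse} of Rouquier's standard additive estimate, which naturally controls the middle term by the kernel and the quotient. The key input for the reverse direction is the recalled fact that each object of $D^b_{\operatorname{coh},Z}(V)$ is scheme-theoretically supported on some nilpotent thickening $Z_n$ of $Z$, combined with the coghost-index framework of Section~\ref{sec:generation}, from which one extracts a uniform generation time for the kernel independent of the thickening.
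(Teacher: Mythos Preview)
Your forward direction and inductive scaffolding match the paper's, but the converse has a genuine gap precisely where you flag ``the main obstacle'': the kernel $D^b_{\operatorname{coh},Z}(V)$ need \emph{not} have finite Rouquier dimension even when both $D^b_{\operatorname{coh}}(V)$ and $D^b_{\operatorname{coh}}(V\cap U)$ do, so Rouquier's additive estimate gives nothing. Take $X=\mathbb{P}^1_k$ with the standard charts $U,V\cong\operatorname{Spec}k[x]$ and $Z=X\setminus U=\{0\}\subset V$. Then $D^b_{\operatorname{coh},Z}(V)$ is the bounded derived category of finite-length $k[x]$-modules; every object lies in $\langle k\rangle$, so any strong generator would force $k$ to be one. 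But multiplication by $x$ on $k[x]/(x^{n+1})$ is $k$-ghost (it kills both $\operatorname{Hom}(k,-)$ and $\operatorname{Ext}^1(k,-)$), and $x^n\neq 0$ there, so $\operatorname{level}^k\bigl(k[x]/(x^{n+1})\bigr)>n$ for all $n$. Thus $D^b_{\operatorname{coh},Z}(V)$ has infinite Rouquier dimension, and neither the nilpotent-thickening filtration nor the coghost-index identity (which computes levels in the ambient category $D^b_{\operatorname{coh}}(V)$, not in the subcategory) rescues this.

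The paper sidesteps the kernel entirely by passing to the big category $D_{\operatorname{Qcoh}}(X)$ and arguing object-by-object with the Mayer--Vietoris triangle
\[
\mathbb{R}l_\ast\mathbb{L}l^\ast E[-1]\to E\to \mathbb{R}s_\ast\mathbb{L}s^\ast E\oplus\mathbb{R}t_\ast\mathbb{L}t^\ast E\to \mathbb{R}l_\ast\mathbb{L}l^\ast E.
\]
The crucial input you are missing is Neeman's approximation (Remark~\ref{rmk:perf_to_perf_big_generation}): for a compact generator $P$ of $D_{\operatorname{Qcoh}}(X)$ there is a \emph{uniform} $N$ with $\mathbb{R}s_\ast\mathcal{O}_U,\,\mathbb{R}t_\ast\mathcal{O}_V\in\overline{\langle P\rangle}_N$, and similarly $\mathbb{R}j_\ast\mathcal{O}_W\in\overline{\langle\mathcal{O}_U\rangle}_L$. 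Combined with the projection formula and the lifted strong generators on $U$ and $V$, this bounds each term of the triangle in $\overline{\langle P\overset{\mathbb{L}}{\otimes}G\rangle}_m$ for a fixed $m$, and Remark~\ref{rmk:strong_oplus_to_strong_gerator_time_bound} descends this to $\langle P\overset{\mathbb{L}}{\otimes}G\rangle_m$ in $D^b_{\operatorname{coh}}(X)$. Without moving to $D_{\operatorname{Qcoh}}$ and invoking Neeman's bound, the pushforwards $\mathbb{R}s_\ast,\mathbb{R}t_\ast,\mathbb{R}l_\ast$ leave $D^b_{\operatorname{coh}}$ and there is no substitute mechanism for uniform control.
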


\subsection{Proofs}
\label{sec:big_descent_results}

We start by first proving Theorem~\ref{thm:descent_conditions}. To do so, a few elementary results are needed. The following remarks will be freely used in the section.

\begin{remark}\label{rmk:tt_triangulated_categories}
    \begin{enumerate}
        \item A \textbf{tensor triangulated category} is a triangulated category with a compatible closed symmetric monoidal strucure, cf. \cite[Appendix A.2]{Hovey/Palmieri/Strickland:1997}. 
        \item If $\mathcal{T}$ is a tensor triangulated category, then we denote the tensor by $\otimes \colon \mathcal{T} \times \mathcal{T} \to \mathcal{T}$ and tensor unit by $1_{\mathcal{T}}$.
        \item A tensor triangulated category is said to be \textbf{rigidly-compactly generated} if it is compactly generated and compact objects coincides with those that are rigid. See \cite[Definition 2.7]{Balmer/Dell'Ambrogio/Sanders:2016}.
        \item A \textbf{tensor-exact} functor $F \colon \mathcal{T} \to \mathcal{S}$ between tensor triangulated categories is an exact functor that is strong symmetric monoidal.
    \end{enumerate} 
\end{remark}

\begin{remark}
    Let $\pi^\ast \colon \mathcal{T} \to \mathcal{S}$ be tensor-exact functor between rigidly-compactly generated tensor triangulated categories. If $\pi^\ast$ preserves small coproducts, then $\pi^\ast$ admits a right adjoint $\pi_\ast \colon \mathcal{S} \to \mathcal{T}$ and there is a projection formula $A \otimes \pi_\ast (B) \cong \pi_\ast ( B \otimes \pi^\ast A)$ for all $A\in \mathcal{S}$ and $B \in \mathcal{T}$. This is \cite[Corollary 2.14]{Balmer/Dell'Ambrogio/Sanders:2016}.
\end{remark}

\begin{lemma}\label{lem:tensor_big_thick}
    Let $\mathcal{T}$ be a tensor triangulated category which admits small coproducts. If $E,G$ are objects of $\mathcal{T}$ and $j\geq 0$, then $\overline{\langle G \rangle}_j  \otimes  E$ is contained in $\overline{\langle G  \otimes  E\rangle}_j$.
\end{lemma}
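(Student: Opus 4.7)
The plan is to prove this by induction on $j \geq 0$, relying on the fact that for a tensor triangulated category (in particular a closed symmetric monoidal triangulated category) the functor $(-) \otimes E \colon \mathcal{T} \to \mathcal{T}$ is an exact functor which preserves small coproducts; this is because in a closed structure, tensoring with $E$ is a left adjoint to the internal hom $[E, -]$, and of course commutes with the shift.

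For the base case $j = 0$, both $\overline{\langle G \rangle}_0$ and $\overline{\langle G \otimes E \rangle}_0$ consist of zero objects, and tensoring a zero object with $E$ yields a zero object. For $j = 1$, any $X \in \overline{\langle G \rangle}_1$ is a retract of a small coproduct of shifts of $G$; tensoring with $E$ preserves retracts, shifts, and small coproducts, so $X \otimes E$ is a retract of a small coproduct of shifts of $G \otimes E$, i.e.\ lies in $\overline{\langle G \otimes E \rangle}_1$.

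For the inductive step, assume the containment is known for $j - 1$ and let $X \in \overline{\langle G \rangle}_j$. Then $X$ is a retract of some object $E'$ fitting in a distinguished triangle
\begin{displaymath}
    A \to E' \to B \to A[1]
\end{displaymath}
with $A \in \overline{\langle G \rangle}_{j-1}$ and $B \in \overline{\langle G \rangle}_1$. Applying the exact functor $(-) \otimes E$ yields a distinguished triangle
\begin{displaymath}
    A \otimes E \to E' \otimes E \to B \otimes E \to (A \otimes E)[1].
\end{displaymath}
By the inductive hypothesis, $A \otimes E \in \overline{\langle G \otimes E \rangle}_{j-1}$, and by the $j = 1$ case, $B \otimes E \in \overline{\langle G \otimes E \rangle}_1$. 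Hence $E' \otimes E \in \overline{\langle G \otimes E \rangle}_j$, and since $X \otimes E$ is a retract of $E' \otimes E$ and $\overline{\langle G \otimes E \rangle}_j$ is closed under retracts, we conclude $X \otimes E \in \overline{\langle G \otimes E \rangle}_j$.

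There is no real obstacle: the only nontrivial ingredient is the exactness of the tensor and its commutation with coproducts and shifts, both of which are immediate from the closed symmetric monoidal structure built into a tensor triangulated category.
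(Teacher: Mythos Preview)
Your proof is correct and follows essentially the same inductive argument as the paper: both establish the $j=0,1$ cases directly and then use exactness of $(-)\otimes E$ on the defining triangle for $\overline{\langle G\rangle}_{j}$. Your version is slightly more explicit in justifying why $(-)\otimes E$ preserves coproducts (via the closed structure), which the paper uses without comment.
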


\begin{proof}
    This is shown by induction on $j$. Note there exists nothing to show when $j=0$, so we may suppose $j=1$. If $T\in \overline{\langle G \rangle}_1  \otimes  E$, then $T\cong T^\prime  \otimes  E$ for some $T^\prime\in \overline{\langle G \rangle}_1$. Since $T^\prime\in \overline{\langle G \rangle}_1$, we know that  $T^\prime$ is a direct summand of a small coproduct of shifts of $G$, say $T^\prime \oplus T^{\prime \prime} \cong \bigoplus_{i \in I} G[r_i]$ for an indexing set $I$. From the following computation, we see that $T^\prime  \otimes  E \in \overline{\langle G  \otimes  E \rangle}_1$:
    \begin{displaymath}
        \begin{aligned}
            (T^\prime  \otimes  E) \oplus (T^{\prime \prime} \otimes  E) &\cong (T^\prime \oplus T^{\prime \prime}) \otimes  E \\&\cong (\bigoplus_{i \in I} G[r_i]) \otimes  E
            \\&\cong \bigoplus_{i \in I} (G \otimes  E)[r_i].
        \end{aligned}
    \end{displaymath}
    This establishes the base case.

    Suppose there exists $n\geq 1$ such that the claim holds for $0\leq j \leq n$. Let $T\in \overline{\langle G \rangle}_{n+1}  \otimes  E$. There exists $T^\prime \in \overline{\langle G \rangle}_{n+1}$ such that $T\cong T^\prime  \otimes  E$. If $T^\prime \in \overline{\langle G \rangle}_{n+1}$, then there exists a distinguished triangle
    \begin{displaymath}
        A \to T^\prime\oplus T^{\prime \prime} \to B \to A[1]
    \end{displaymath}
    where $A\in \overline{\langle G \rangle}_n$ and $B\in \overline{\langle G \rangle}_1$. Applying the exact endofunctor $(-) \otimes  E$ on $\mathcal{T}$, this yields another distinguished triangle
    \begin{displaymath}
        A  \otimes  E \to (T^\prime\oplus T^{\prime \prime}) \otimes  E \to B \otimes  E \to (A  \otimes  E)[1].
    \end{displaymath}
    By the induction hypothesis, we know that $A \otimes  E \in \overline{ \langle G \otimes  E \rangle}_n$ and $B\in \overline{ \langle G \otimes  E \rangle}1$. Hence, it follows that
    \begin{displaymath}
        (T^\prime \otimes  E) \oplus (T^{\prime \prime}  \otimes  E) \cong (T^\prime\oplus T^{\prime \prime}) \otimes  E \in \overline{\langle G \otimes  E\rangle}_{n+1},
    \end{displaymath}
    which completes the proof.
\end{proof}

\begin{lemma}\label{lem:big_thick_compare}
    Let $\mathcal{T}$ be a triangulated category admitting small coproducts and $E,G$ objects in $\mathcal{T}$. If $E\in \overline{\langle G \rangle}_n$ and $j\geq 0$, then $\overline{\langle E \rangle}_j$ is contained in $\overline{\langle  G \rangle}_{jn}$.
\end{lemma}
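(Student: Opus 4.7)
The plan is to prove the lemma by induction on $j$, reducing the statement to an additivity property for the big-thick filtration $\overline{\langle G \rangle}_{\bullet}$ under extensions.

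For the base case $j=0$, the subcategory $\overline{\langle E \rangle}_0$ consists only of objects isomorphic to zero, and such objects lie in every $\overline{\langle G \rangle}_m$, so nothing is to show. For $j=1$, any object of $\overline{\langle E \rangle}_1$ is a retract of a small coproduct of shifts of $E$. Since $E \in \overline{\langle G \rangle}_n$ by hypothesis, and the subcategory $\overline{\langle G \rangle}_n$ is closed under shifts, small coproducts, and retracts (as is immediate from the definition), the inclusion $\overline{\langle E \rangle}_1 \subseteq \overline{\langle G \rangle}_n = \overline{\langle G \rangle}_{1\cdot n}$ is immediate.

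For the inductive step, suppose the claim holds for some $j \geq 1$ and take $T \in \overline{\langle E \rangle}_{j+1}$. By definition, $T$ is a retract of an object $T'$ sitting in a distinguished triangle $A \to T' \to B \to A[1]$ with $A \in \overline{\langle E \rangle}_j$ and $B \in \overline{\langle E \rangle}_1$. The inductive hypothesis and the base case give $A \in \overline{\langle G \rangle}_{jn}$ and $B \in \overline{\langle G \rangle}_{n}$. To conclude, I invoke the following additivity claim:
\begin{displaymath}
A \in \overline{\langle G \rangle}_{a},\ B \in \overline{\langle G \rangle}_{b},\ A \to T' \to B \to A[1]\ \text{distinguished} \ \Longrightarrow \ T' \in \overline{\langle G \rangle}_{a+b}.
\end{displaymath}
Applied with $a = jn$ and $b = n$, this places $T'$ in $\overline{\langle G \rangle}_{(j+1)n}$, and closure under retracts of $\overline{\langle G \rangle}_{(j+1)n}$ yields $T \in \overline{\langle G \rangle}_{(j+1)n}$, completing the induction.

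The main obstacle is the additivity claim itself. I would establish it by a secondary induction on $b$. The case $b=1$ is precisely the recursive definition of $\overline{\langle G \rangle}_{a+1}$. For $b \geq 2$, any $B$ in $\overline{\langle G \rangle}_{b}$ is a retract of a $B''$ fitting into a triangle $B_1 \to B'' \to B_2 \to B_1[1]$ with $B_1 \in \overline{\langle G \rangle}_{b-1}$ and $B_2 \in \overline{\langle G \rangle}_1$; after replacing $T'$ by a suitable extension of $B''$ by $A$ and invoking the octahedral axiom on the composite map to $B_2$, one obtains an extension of $B_2$ by an object that by induction lies in $\overline{\langle G \rangle}_{a+b-1}$, hence the total object sits in $\overline{\langle G \rangle}_{a+b}$, and retract closure descends the conclusion to the original $T'$. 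This $\star$-composition fact is standard in the literature (it is the coproduct-closed analogue of Rouquier's associativity for the $\diamond$-operation), so the real content of the lemma is just setting up the two nested inductions cleanly.
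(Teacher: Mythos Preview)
Your proof is correct and follows essentially the same approach as the paper: induction on $j$, with the inductive step reducing to the additivity of the filtration under extensions, i.e.\ $\overline{\langle G \rangle}_a \ast \overline{\langle G \rangle}_b \subseteq \overline{\langle G \rangle}_{a+b}$. The only difference is that you explicitly isolate and sketch a proof of this additivity (via the octahedral axiom, as in Rouquier's associativity for the $\diamond$-operation), whereas the paper invokes it tacitly in the final sentence of the inductive step.
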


\begin{proof}
    This will be shown by induction on $j$. There exists nothing to check when $j=0$, so we may suppose $j=1$. If $E\in \overline{\langle G \rangle}_n$, then there exists a distinguished triangle 
    \begin{displaymath}
        A \to E \oplus E^\prime \to B \to A[1]
    \end{displaymath}
    where $A\in \overline{\langle G \rangle}_{n-1}$ and $B\in \overline{\langle G \rangle}_1$. Any small coproduct of this distinguished triangle remains so, and hence, $\overline{ \langle E \rangle}_1 \subseteq \overline{\langle G \rangle}_n$. This establishes the base case. 
    
    Assume there exists $j\geq 1$ such that the claim holds for each $0\leq c \leq j$. Let $T\in \overline{\langle  E \rangle}_{j+1}$. There exists a distinguished triangle
    \begin{displaymath}
        A^\prime \to T\oplus T^\prime \to B^\prime \to A^\prime[1]
    \end{displaymath}
    where $A^\prime\in \overline{\langle  E \rangle}_n$ and $B^\prime\in \overline{ \langle E \rangle}_1$. The induction step tells us $A^\prime\in \overline{ \langle G \rangle}_{jn}$ and $B^\prime \in \overline{ \langle G \rangle}_j$. Therefore, $T\in \overline{\langle  E \rangle}_{(j+1)n}$ as desired, and this completes the proof.
\end{proof}

\begin{proof}[Proof of Theorem~\ref{thm:descent_conditions}]
    If $\pi_\ast G$ is a strong $\bigoplus$-generator for $\mathcal{S}$, then there exists $n\geq 0$ such that $\mathcal{S}=\overline{ \langle \pi_\ast G \rangle}_n$, and so $1_\mathcal{S} \in \overline{ \langle \pi_\ast G \rangle}_n$. Hence, we only need to check the backward direction.

    Suppose $G$ is a strong $\bigoplus$-generator for $\mathcal{T}$ such that $1_\mathcal{S} \in \overline{ \langle \pi_\ast G \rangle}_n$ for some $n \geq 0$. Given $E \in \mathcal{S}$ and $j\geq 0$, Lemma~\ref{lem:tensor_big_thick} ensures $\overline{ \langle \pi_\ast G \rangle}_j \otimes E$ is contained in $\overline{\langle  \pi_\ast G \otimes E  \rangle}_j$. The projection formula promises $\pi_\ast G \otimes E \cong  \pi_\ast (G \otimes \mathbb{L} \pi^\ast E)$, and so $\overline{ \langle \pi_\ast G \rangle}_j \otimes E$ is contained in $\overline{\langle   \pi_\ast (G \otimes \mathbb{L} \pi^\ast E) \rangle}_j$. If $\mathcal{O}_X \in \overline{\langle \pi_\ast G \rangle}_n$, then $E\in \overline{\langle  \pi_\ast (G \otimes \mathbb{L} \pi^\ast E)  \rangle}_n$. Choosing $l\geq 0$ such that $\mathcal{T} = \overline{\langle G \rangle}_l$, 
    we have $G\otimes \mathbb{L} \pi^\ast E \in \overline{\langle G \rangle}_l$, and so
    \begin{displaymath}
        \pi_\ast (G\otimes \mathbb{L} \pi^\ast E) \in \overline{ \langle \pi_\ast G \rangle}_l.
    \end{displaymath}
    Furthermore, via Lemma~\ref{lem:big_thick_compare}, $E$ is an object of $\overline{ \langle \pi_\ast (G \otimes \mathbb{L} \pi^\ast E) \rangle}_n$, and so it is contained in $\overline{\langle \pi_\ast G\rangle}_{ln}$. As $E$ is arbitrary, this shows $\pi_\ast G$ is a strong $\bigoplus$-generator in $\mathcal{S}$, which completes the proof.
\end{proof}

\begin{remark}\label{rmk:descent_conditions_for_schemes_big_generation}
    Suppose $\pi \colon Y \to X$ is a proper surjective morphism of Noetherian schemes. Let $G$ be a strong $\oplus$-generator for $D_{\operatorname{Qcoh}}(Y)$. By Theorem~\ref{thm:descent_conditions}, we see that $\mathbb{R}\pi_\ast G$ is a strong $\oplus$-generator for $D_{\operatorname{Qcoh}}(X)$ if, and only if, there exists an $n\geq 0$ such that $\mathcal{O}_X \in \overline{ \langle \mathbb{R}\pi_\ast G \rangle}_n$. For example, if $Y$ is a separated quasi-excellent Noetherian scheme of finite Krull dimension, then there exists $G$ in $D^b_{\operatorname{coh}}(Y)$ which is a strong $\oplus$-generator for $D_{\operatorname{Qcoh}}(Y)$ \cite[Main Theorem]{Aoki:2021}. Hence, if $\mathcal{O}_X \in \langle \mathbb{R}\pi_\ast G \rangle$, then $\mathbb{R}\pi_\ast G$ is a strong generator for $D^b_{\operatorname{coh}}(X)$ in light of Theorem~\ref{thm:descent_conditions} and Remark~\ref{rmk:strong_oplus_to_strong_gerator_time_bound}.
\end{remark}

After establishing Theorem~\ref{thm:descent_conditions}, we next embark on proving Theorem~\ref{thm:finite_rouquier_iff_locally}. The technology introduced in \cite{Neeman:2021} is a motivating theme in our strategy below.

\begin{lemma}\label{lem:pushforward_coprod}
    Let $\mathcal{S},\mathcal{T}$ be triangulated categories admitting small coproducts. Let $F\colon\mathcal{T} \to \mathcal{S}$ be an exact functor preserving small coproducts. If $n\geq 0$ and $G\in \mathcal{T}$, then $F(\overline{\langle G \rangle}_n)$ is contained in $\overline{\langle F(G)\rangle}_n$. 
\end{lemma}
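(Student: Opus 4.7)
The plan is to proceed by induction on $n$, mirroring the inductive definition of $\overline{\langle - \rangle}_n$. The case $n=0$ is immediate, since $\overline{\langle G \rangle}_0$ consists of zero objects and any additive functor sends zero to zero. For the base case $n=1$, I would observe that any $E \in \overline{\langle G \rangle}_1$ is (up to isomorphism) a retract of a coproduct of the form $\bigoplus_{i \in I} G[r_i]$; since $F$ is exact (so commutes with the shift) and preserves small coproducts, and since any functor preserves retracts, $F(E)$ is a retract of $\bigoplus_{i \in I} F(G)[r_i]$, placing it in $\overline{\langle F(G) \rangle}_1$.

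For the inductive step, assume the containment holds for some $n \geq 1$. Given $E \in \overline{\langle G \rangle}_{n+1}$, by definition there exist $E'$ and a distinguished triangle
\begin{equation*}
    A \to E \oplus E' \to B \to A[1]
\end{equation*}
in $\mathcal{T}$ with $A \in \overline{\langle G \rangle}_n$ and $B \in \overline{\langle G \rangle}_1$. Applying the exact functor $F$ yields a distinguished triangle
\begin{equation*}
    F(A) \to F(E) \oplus F(E') \to F(B) \to F(A)[1]
\end{equation*}
in $\mathcal{S}$, using that $F$ preserves finite coproducts (hence binary direct sums) and shifts. The inductive hypothesis gives $F(A) \in \overline{\langle F(G) \rangle}_n$, and the base case gives $F(B) \in \overline{\langle F(G) \rangle}_1$. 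Consequently $F(E)$ is a retract of $F(E) \oplus F(E')$, which sits in the middle of an admissible triangle, so $F(E) \in \overline{\langle F(G) \rangle}_{n+1}$, completing the induction.

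There is no serious obstacle here: the statement amounts to the fact that the inductive construction of $\overline{\langle - \rangle}_n$ is natural with respect to any coproduct-preserving exact functor. The only minor point worth flagging is that the definition takes retracts at every stage, but this presents no difficulty because retracts are preserved by arbitrary functors.
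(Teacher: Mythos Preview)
Your proposal is correct and follows essentially the same route as the paper's own proof: both proceed by induction on $n$, handle $n=1$ by observing that $F$ preserves shifts, small coproducts, and retracts, and then treat the inductive step by applying the exact functor $F$ to the defining triangle $A \to E \oplus E' \to B \to A[1]$ and invoking the inductive hypothesis on $A$ and the base case on $B$.
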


\begin{proof}
    The claim follows by induction on $n$. If $n=0$, then there exists nothing to check, so we may suppose $n=1$. If $E\in \overline{\langle G \rangle}_1$, then $E$ is a direct summand of a coproduct of shifts of $G$. Hence, $F (E)$ is a direct summand of a coproduct of shifts of $F G$, and so $F (E) \in\overline{ \langle F (G) \rangle}_1$. This establishes the base case. 
    
    Assume there exists $n\geq 1$ such that for each $0\leq k \leq n$ the claim holds for any object contained in $\overline{\langle G \rangle}_k$. Let $E \in \overline{\langle G \rangle}_{n+1} $. There exists a distinguished triangle
    \begin{displaymath}
        A \to E \oplus E^\prime \to B \to A[1]
    \end{displaymath}
    where $A \in \overline{\langle G \rangle}_n$ and $B\in \overline{\langle G \rangle}_1$. Moreover, this yields another distinguished triangle in $\mathcal{S}$,
    \begin{displaymath}
        F (A) \to F (E) \oplus F (E^\prime) \to F (B) \to F (A[1]).
    \end{displaymath}
    From the inductive step, it follows that $F (A) \in \overline{\langle F (G )\rangle}_n$ and $F (B)\in \overline{\langle F (G )\rangle}_1$. Consequently, one has $F (E) \in \overline{ \langle F (G) \rangle}_{n+1}$, which completes the proof.
\end{proof}

\begin{lemma}\label{lem:sum_coprod}
    Let $\mathcal{T}$ be a triangulated category admitting small coproducts. If $H,G$ are objects of $\mathcal{T}$, then $\overline{\langle G \rangle}_n \oplus \overline{\langle H \rangle}_m$ is contained in $\overline{ \langle G \oplus H \rangle}_{\max\{n,m\}}$.
\end{lemma}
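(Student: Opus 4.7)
The plan is to show the containment by reducing to two ingredients: the containment of each summand's thick closure into that of $G \oplus H$, and closure of $\overline{\langle G \oplus H \rangle}_k$ under finite coproducts. Fix $A \in \overline{\langle G \rangle}_n$ and $B \in \overline{\langle H \rangle}_m$, and set $K = G \oplus H$. The goal is to conclude $A \oplus B \in \overline{\langle K \rangle}_{\max\{n,m\}}$.

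First, I would observe that $G$ is a retract of $K$ (via the canonical split inclusion and projection), so $G \in \overline{\langle K \rangle}_1$; likewise $H \in \overline{\langle K \rangle}_1$. Applying Lemma~\ref{lem:big_thick_compare} with $E = G$, the ambient generator replaced by $K$, and the parameter there equal to $1$ yields $\overline{\langle G \rangle}_n \subseteq \overline{\langle K \rangle}_n$, and symmetrically $\overline{\langle H \rangle}_m \subseteq \overline{\langle K \rangle}_m$. In particular $A, B \in \overline{\langle K \rangle}_{\max\{n,m\}}$ by using the exhaustive filtration $\overline{\langle K \rangle}_k \subseteq \overline{\langle K \rangle}_{k+1}$.

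The remaining step is to check that $\overline{\langle K \rangle}_k$ is closed under forming the direct sum of two (indeed, of small families of) objects. This I would verify by a short induction on $k$: for $k=1$ it is immediate since the coproduct of two retracts of small coproducts of shifts of $K$ is again such; for the inductive step, one takes the direct sum of the two defining distinguished triangles for $A$ and $B$ and applies the induction hypothesis to the outer terms to land $A \oplus B$ in $\overline{\langle K \rangle}_k$. Combining these observations completes the proof.

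There is no real obstacle here; the only point requiring care is to invoke Lemma~\ref{lem:big_thick_compare} with the correct parameter choice (namely, with $n=1$ so that the containment preserves levels rather than inflating them) and to justify the closure of $\overline{\langle K \rangle}_k$ under finite coproducts, which is a routine induction using the exactness of coproducts of triangles in a triangulated category admitting small coproducts.
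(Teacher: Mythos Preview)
Your argument is correct. It differs from the paper's proof only in organization: the paper reduces to $n=m$ and runs a single induction that simultaneously handles the $G$-side and the $H$-side, forming the direct sum of the two defining triangles at each stage. You instead first invoke Lemma~\ref{lem:big_thick_compare} (with parameter $1$) to absorb both $\overline{\langle G\rangle}_n$ and $\overline{\langle H\rangle}_m$ into $\overline{\langle K\rangle}_{\max\{n,m\}}$, and then separately verify that each $\overline{\langle K\rangle}_k$ is closed under finite coproducts. The inductive core---summing two distinguished triangles and using that the result is again distinguished---is identical in both arguments. Your route is slightly more modular and reuses an existing lemma; the paper's is self-contained and avoids appealing to Lemma~\ref{lem:big_thick_compare}. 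Neither buys anything the other does not.
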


\begin{proof}
    Recall for every integer $k\geq 0$ and collection of objects $\mathcal{B}$ in $\mathcal{T}$, one has $\overline{\langle \mathcal{B}\rangle}_k \subseteq \overline{\langle \mathcal{B}\rangle}_{k+1}$. Since $n \leq m$ or $m \leq n$, we can reduce to the case $n=m$. The proof will follow by induction on $n$. Consider an object $E \in \overline{\langle G \rangle}_1 \oplus \overline{\langle H \rangle}_1$. Then $E$ is the direct sum of two objects $A\oplus B$ where $A \in \overline{\langle G \rangle}_1$ and $B \in \overline{\langle H \rangle}_1$. Both $A,B$ are respectively direct summands of coproducts of shifts of $G,H$. Hence, $E$ is a direct summand of coproduct of shifts of $G\oplus H$, which establishes the base case. 
    
    Assume now the claim holds whenever $k<n$, and let $E\in \overline{\langle G \rangle}_n \oplus \overline{\langle H \rangle}_n$. As above, $E$ is the direct sum of two objects $E_1\oplus E_2$ where $E_1 \in \overline{\langle G \rangle}_n$ and $E_2 \in \overline{\langle H \rangle}_n$. There exists $A_1 \in \overline{\langle G \rangle}_{n-1}$, $B_1 \in \overline{\langle G \rangle}_1$, $A_2 \in \overline{\langle H \rangle}_{n-1}$, and $B_2 \in \overline{\langle  H \rangle}_1$ yielding the following distinguished triangles:
    \begin{displaymath}
        \begin{aligned}
            & A_1 \to E_1 \to B_1 \to A_1 [1],
            \\& A_2 \to E_2 \to B_2 \to A_2 [1].
        \end{aligned}
    \end{displaymath}
    The inductive step ensures that $A_1 \oplus A_2 \in \overline{\langle G \oplus H \rangle}_n$ and $B_1 \oplus B_2 \in \overline{\langle  G \oplus H \rangle}_1$. There exists the distinguished triangle
    \begin{displaymath}
        A_1 \oplus A_2 \to E_1 \oplus E_2 \to B_1 \oplus B_2 \to (A_1\oplus A_2)[1],
    \end{displaymath}
    and hence, $E_1\oplus E_2 \in \overline{\langle  G \oplus H \rangle}_{n+1}$. This completes the proof by induction.
\end{proof}

\begin{remark}\label{rmk:perf_to_perf_big_generation}
    Let $X$ be a Noetherian separated scheme, and $i\colon U \to X$ an open immersion. Suppose $G\in \operatorname{perf}X$ is a compact generator for $D_{\operatorname{Qcoh}}(X)$. For any $P\in \operatorname{perf}U$, there exists $N\geq 0$ such that $\mathbb{R}i_\ast P \in \overline{\langle  G \rangle}_N$. This is a special case of \cite[Theorem 0.18 or Theorem 6.2]{Neeman:2021}.
\end{remark}

\begin{proof}[Proof of Theorem~\ref{thm:finite_rouquier_iff_locally}]
    If $s\colon U\to X$ is an open immersion, then the functor $\mathbb{L}s^\ast \colon D^b_{\operatorname{coh}}(X) \to D^b_{\operatorname{coh}}(U)$ is a Verdier localization. Hence, if $D^b_{\operatorname{coh}}(X)$ has finite Rouquier dimesion, then so does $D^b_{\operatorname{coh}}(U)$, which exhibits $(1)\implies (2)$.

    Next, we want to show the converse direction. This will be done by induction on the minimal number of components for an open affine cover of $X$. In particular, we will use the notion of generation in the derived category $D_{\operatorname{Qcoh}}(X)$ utilizing small coproducts along with a Mayer-Vietoris sequence to appeal to Remark~\ref{rmk:strong_oplus_to_strong_gerator_time_bound}. 
    
    If $X$ can be covered by only one open affine, then the desired claim holds, so assme there exists $N\geq 2$ and an open affine cover $X=\bigcup^N_{i=1}U_i$. Set $U=U_N$ and $V = \bigcup^{N-1}_{i=1} U_i$. Let $s\colon U \to X$ and $t \colon V \to X$ be the associated open immersions. The inductive hypothesis tells us there exists strong generators $G^\prime, G^{\prime \prime}$ respectively for $D^b_{\operatorname{coh}}(U), D^b_{\operatorname{coh}}(V)$. Recall that both $D^b_{\operatorname{coh}}(U)$ and $D^b_{\operatorname{coh}}(V)$ are Verdier localizations of $D^b_{\operatorname{coh}}(X)$. Hence, there exists objects $\widetilde{G^\prime},\widetilde{G^{\prime \prime}}$ in $D^b_{\operatorname{coh}}(X)$ such that $\mathbb{L}s^\ast \widetilde{G^\prime}\cong G^\prime$ in $D^b_{\operatorname{coh}}(U)$ and $\mathbb{L}t^\ast \widetilde{G^{\prime \prime}}\cong G^{\prime \prime}$ in $D^b_{\operatorname{coh}}(V)$. If we let $G:= \widetilde{G^\prime} \oplus \widetilde{G^{\prime \prime}}$, then $\mathbb{L}s^\ast G$ and $\mathbb{L}t^\ast G$ are respectively strong generators for $D^b_{\operatorname{coh}}(U)$ and $D^b_{\operatorname{coh}}(V)$.
    
    Suppose $P$ is a compact generator for $D_{\operatorname{Qcoh}}(X)$, cf. \cite[Theorem 3.1.1]{BVdB:2003} for existence thereof. By Remark~\ref{rmk:perf_to_perf_big_generation}, there exists $N_1,N_2$ such that $\mathbb{R}s_\ast \mathcal{O}_U \in \overline{\langle  P \rangle}_{N_1}$ and $\mathbb{R}t_\ast \mathcal{O}_V \in \overline{\langle  P \rangle}_{N_2}$. If $N=\max\{N_1,N_2\}$, then Lemma~\ref{lem:sum_coprod} promises $\mathbb{R}s_\ast \mathcal{O}_U \oplus \mathbb{R}t_\ast \mathcal{O}_V \in \overline{\langle  P \rangle}_N$.

    Set $W = U \cap V$, and let $j\colon W \to U$ be the associated open immersion. There exists an open immersion $l\colon W \to X$ such that $l=s \circ j$. For each $E\in D^b_{\operatorname{coh}}(X)$ there exists a Mayer-Vietoris type triangle (see \cite[Section 3.3]{BVdB:2003} or \cite[Section 5.2.4]{Rouquier:2008})
    \begin{displaymath}
        \mathbb{R} l_\ast \mathbb{L} l^\ast E [-1] \to E \to \mathbb{R} s_\ast \mathbb{L} s^\ast E \oplus \mathbb{R} t_\ast \mathbb{L} t^\ast E \to \mathbb{R} l_\ast \mathbb{L} l^\ast E.
    \end{displaymath}
    If $l= s \circ j$, then $\mathbb{R} l_\ast \mathbb{L} l^\ast = \mathbb{R} s_\ast \mathbb{R} j_\ast \mathbb{L} j^\ast \mathbb{L} s^\ast$, and so one can rewrite the triangle as
    \begin{displaymath}
        \begin{aligned}
            \mathbb{R} s_\ast & \mathbb{R} j_\ast \mathbb{L} j^\ast \mathbb{L} s^\ast E [-1] \to E \\&\to \mathbb{R} s_\ast \mathbb{L} s^\ast E \oplus \mathbb{R} t_\ast \mathbb{L} t^\ast E \to \mathbb{R} s_\ast \mathbb{R} j_\ast \mathbb{L} j^\ast \mathbb{L} s^\ast E.
        \end{aligned}
    \end{displaymath}
    Choose $a,b\geq 0$ such that $\langle \mathbb{L}s^\ast G \rangle_a = D^b_{\operatorname{coh}}(U)$ and $\langle \mathbb{L}t^\ast G \rangle_b = D^b_{\operatorname{coh}}(V)$. If $\mathbb{L}s^\ast E \in \langle \mathbb{L}s^\ast G \rangle_a$ and $\mathbb{L}t^\ast E \in \langle \mathbb{L}t^\ast G \rangle_b$, then $\mathbb{R}s_\ast \mathbb{L}s^\ast E \in \langle \mathbb{R}s_\ast \mathbb{L}s^\ast G \rangle_a$ and $\mathbb{R}t_\ast \mathbb{L}t^\ast E \in \langle \mathbb{R}t_\ast \mathbb{L}t^\ast G \rangle_b$ in $D_{\operatorname{Qcoh}}(X)$. However, as both $\mathbb{R}s_\ast \mathcal{O}_U $ and $\mathbb{R}t_\ast \mathcal{O}_V$ belong to $\overline{\langle  P \rangle}_N$, it follows from projection formula that $\mathbb{R}s_\ast \mathbb{L}s^\ast G$ and $\mathbb{R}t_\ast \mathbb{L}t^\ast G$ belong to $\overline{\langle  P \overset{\mathbb{L}}{\otimes} G \rangle}_N$. Set $c=\max\{a,b\}$. Then $\mathbb{R}s_\ast \mathbb{L}s^\ast E$ and $\mathbb{R}t_\ast \mathbb{L}t^\ast E$ belong to $\overline{\langle  P \overset{\mathbb{L}}{\otimes} G \rangle}_{cN}$.

    Since $U$ is affine, $\mathcal{O}_U$ is a compact generator for $D_{\operatorname{Qcoh}}(U)$. Once more, Remark~\ref{rmk:perf_to_perf_big_generation} tells us there exists $L \geq 0$ such that $\mathbb{R}j_\ast \mathcal{O}_W$ belongs to $\overline{\langle \mathcal{O}_U \rangle}_L$. If we tensor with $\mathbb{L}s^\ast E$, then we see that $\mathbb{R}j_\ast \mathbb{L}j^\ast \mathbb{L} s^\ast E$ belongs to $\overline{\langle \mathbb{L} s^\ast E \rangle}_L$. Hence, $\mathbb{R}j_\ast \mathbb{L}j^\ast \mathbb{L} s^\ast E$ belongs to $\overline{\langle \mathbb{L} s^\ast G \rangle}_{aL}$, and so $\mathbb{R}s_\ast \mathbb{R}j_\ast \mathbb{L}j^\ast \mathbb{L} s^\ast E$ belongs to $\overline{\langle \mathbb{R}s_\ast \mathbb{L} s^\ast G \rangle}_{aL}$. Consequently, it follows that $\mathbb{R}s_\ast \mathbb{R}j_\ast \mathbb{L}l^\ast \mathbb{L} s^\ast E$ belongs to $\overline{\langle P \overset{\mathbb{L}}{\otimes} G\rangle}_{a NL}$. We have shown that $E$ belongs to $\overline{\langle P \overset{\mathbb{L}}{\otimes} G \rangle}_{N (aL + c)}$. By Remark~\ref{rmk:strong_oplus_to_strong_gerator_time_bound}, it follows that $E\in \langle P\overset{\mathbb{L}}{\otimes} G \rangle_{N (aL + c)}$. These values $a,b,c,N,L$ are independent of $E$, so $P\overset{\mathbb{L}}{\otimes} G$ is a strong generator for $D^b_{\operatorname{coh}}(X)$ as desired.
\end{proof}

\section{Classical generation descent}
\label{sec:classical_descent}

This section establishes a condition for when classical generation is preserved by the derived pushforward functor of a proper surjective morphism of Noetherian schemes.

\begin{theorem}\label{thm:proper_surjective_descent}
    Suppose $\pi \colon Y\to S$ is a proper surjective morphism where $S$ is a Noetherian $J\textrm{-}2$ scheme of finite Krull dimension. If $G$ is a classical generator for $D^b_{\operatorname{coh}}(Y)$, then $\mathbb{R}\pi_\ast G$ is a classical generator for $D^b_{\operatorname{coh}}(S)$.
\end{theorem}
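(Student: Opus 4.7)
The strategy is to reduce the statement to a $G$-independent descent property of $\mathbb{R}\pi_\ast$, and then establish that descent by Noetherian induction exploiting the $J\textrm{-}2$ hypothesis.

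Since $\pi$ is proper, $\mathbb{R}\pi_\ast$ restricts to a triangulated functor $D^b_{\operatorname{coh}}(Y)\to D^b_{\operatorname{coh}}(S)$. Triangulated functors take thick closures into thick closures, in the sense that $\mathbb{R}\pi_\ast(\langle G\rangle_n)\subseteq \langle \mathbb{R}\pi_\ast G\rangle_n$ for every $n\geq 0$; combined with $\langle G\rangle=D^b_{\operatorname{coh}}(Y)$, this places the entire essential image of $\mathbb{R}\pi_\ast$ inside $\langle \mathbb{R}\pi_\ast G\rangle$. Setting $\mathcal{C}\colonequals \langle \mathbb{R}\pi_\ast(D^b_{\operatorname{coh}}(Y))\rangle$, the theorem therefore reduces to the statement $\mathcal{C}=D^b_{\operatorname{coh}}(S)$, which no longer mentions $G$.

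I would prove this by Noetherian induction on closed subsets of $S$. Let $Z\subseteq S$ be minimal for the property that some $E\in D^b_{\operatorname{coh}}(S)$ supported on $Z$ lies outside $\mathcal{C}$, and derive a contradiction. The standard short exact sequences attached to an irreducible decomposition $Z=Z_1\cup Z_2$ reduce matters to $Z$ integral, and then Rouquier's scheme-theoretic support lemma combined with the ideal-power filtration $0\to \mathcal{I}^k/\mathcal{I}^{k+1}\to \mathcal{O}_{Z_{k+1}}\to \mathcal{O}_{Z_k}\to 0$ on infinitesimal thickenings further reduces the problem to showing $i_{Z\ast}\mathcal{O}_Z\in\mathcal{C}$ for each integral closed subscheme $Z$. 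The base change $\pi_Z\colon Y_Z\colonequals Y\times_S Z\to Z$ is again proper surjective, and the pushforward identity $\mathbb{R}\pi_\ast\circ i_{Y_Z\ast}=i_{Z\ast}\circ \mathbb{R}(\pi_Z)_\ast$ guarantees that $i_{Z\ast}\mathbb{R}(\pi_Z)_\ast F\in\mathcal{C}$ for every $F\in D^b_{\operatorname{coh}}(Y_Z)$, so a plentiful supply of objects on $Z$ pushed forward from $Y$ is available inside $\mathcal{C}$.

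Here the $J\textrm{-}2$ hypothesis enters decisively: the regular locus $U\colonequals \operatorname{reg}(Z)$ is open and dense in $Z$, and by induction every object of $D^b_{\operatorname{coh}}(S)$ supported on the proper closed complement $Z\setminus U$ already belongs to $\mathcal{C}$. The main obstacle, and the step I expect to require the most care, is to exhibit an object in $\mathcal{C}$ whose restriction to $U$ recovers $\mathcal{O}_U$; its difference with $i_{Z\ast}\mathcal{O}_Z$ is then supported on $Z\setminus U$ and handled by the inductive hypothesis. Since $U$ is regular one has $D^b_{\operatorname{coh}}(U)=\operatorname{perf}(U)$, and by generic flatness $U$ may be shrunk so that $\pi_Z$ is flat over $U$ and $\mathbb{R}(\pi_U)_\ast\mathcal{O}_{Y_U}$ becomes a perfect complex. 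Extracting $\mathcal{O}_U$ as a direct summand of this perfect pushforward — e.g.\ via the Stein factorization of $\pi_Z$ together with a generic trace splitting in favourable characteristics, with a more delicate argument needed in positive characteristic — is the technical heart of the proof and closes the Noetherian induction once achieved.
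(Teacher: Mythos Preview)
Your global architecture matches the paper's proof: both reduce to the $G$-independent claim that the thick closure of the essential image of $\mathbb{R}\pi_\ast$ is all of $D^b_{\operatorname{coh}}(S)$, run an induction (the paper on $\dim S$, you on closed subsets), reduce to integral closed subschemes via base change along closed immersions, and invoke the $J\textrm{-}2$ hypothesis to open up a dense regular locus whose complement is handled inductively.

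The gap is exactly at the step you flag as the crux. You want $\mathcal{O}_U$ as a direct summand of $\mathbb{R}(\pi_U)_\ast\mathcal{O}_{Y_U}$ and gesture at Stein factorization plus a trace splitting, explicitly leaving positive characteristic as ``a more delicate argument'' you do not supply; since the theorem is characteristic-free, the proof as written is incomplete. The paper sidesteps the splitting entirely. After shrinking so that $U$ is affine and contained in the regular locus, one has $D^b_{\operatorname{coh}}(U)=\operatorname{perf}U$, and surjectivity of $\pi$ forces $\mathbb{L}j^\ast\mathbb{R}\pi_\ast G$ to have full support on $U$. Hopkins--Neeman's classification of thick subcategories of $\operatorname{perf}$ over a Noetherian ring \cite[Theorem~1.5]{Neeman:1992} then says that \emph{any} perfect complex with full support classically generates $\operatorname{perf}U$, so $\mathbb{L}j^\ast\mathbb{R}\pi_\ast G$ already generates $D^b_{\operatorname{coh}}(U)$ with no splitting and no characteristic hypothesis; the paper's Lemma~\ref{lem:generate_generically} (your ``difference supported on $Z\setminus U$'' remark) then closes the induction. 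For what it is worth, your summand can also be produced without trace: at the generic point $\eta$ of $Z$ the unit $k(\eta)\to H^0(Y_\eta,\mathcal{O}_{Y_\eta})$ is an injection of $k(\eta)$-vector spaces, hence split, and this spreads out over a nonempty open---but invoking Hopkins--Neeman is the cleaner route.
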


If both $D^b_{\operatorname{coh}}(Y)$ and $D^b_{\operatorname{coh}}(X)$
have finite Rouquier dimension, then
Theorem~\ref{thm:proper_surjective_descent} becomes a statement about the
preservation of strong generation. Before pressing forth with the proof of
Theorem~\ref{thm:proper_surjective_descent}, there are natural occurrences
where such results are applicable.

\begin{example}\label{ex:blowup_proper_surjective}
    Suppose $X$ is a Noetherian integral scheme. Let $Z\subseteq X$ be a closed subscheme, and $\pi \colon \widetilde{X}\to X$ the blowup of $X$ along $Z$. As $\pi$ is a proper morphism, its image $\pi(\widetilde{X})\subseteq X$ is closed. Furthermore, $\pi$ yields an isomorphism away from $Z$, i.e. $\pi^{-1}(X\setminus Z) \cong X\setminus Z$. Hence, $\pi(\widetilde{X})$ contains the generic point of $X$, so $\pi$ must be surjective.
\end{example}

\begin{example}\label{ex:resolution_char_zero}
    Consider a Noetherian quasiexcellent integral scheme $X$ of finite Krull dimension and of characteristic zero. By \cite[Theorem 2.3.6]{Temkin:2008}, there exists a resolution of singularities $\pi\colon \widetilde{X} \to X$. As $\pi$ is proper birational and $X$ is integral, it follows that $\pi$ is surjective. For instance, if $X$ is a quasi-projective variety over a field of characteristic zero and $\pi \colon \widetilde{X}\to X$ is a projective resolution of singularities, then $\bigoplus^{\dim X}_{i=0}\mathbb{R}\pi_\ast \mathcal{L}^{\otimes i}$ is a strong generator for $D^b_{\operatorname{coh}}(X)$ where $\mathcal{L}$ is any ample line bundle on $\widetilde{X}$. Note that $\bigoplus^{\dim X}_{i=0}\mathcal{L}^{\otimes i}$ is a strong generator for $D^b_{\operatorname{coh}}(\widetilde{X})$ \cite[Theorem 4]{Orlov:2009}. By \textit{projective} resolution of singularities $\pi \colon \widetilde{X}\to X$, we mean a projective birational morphism with $\widetilde{X}$ regular. If $X$ is quasi-projective, then there exists a projective resolution of singularities by \cite[Main Theorem 1]{Hironaka:1964a}.
\end{example}

\begin{remark}
   The assumption on the characteristic of the scheme or base field in Example~\ref{ex:resolution_char_zero} is not important, and is only made to ensure the existence of a resolution of singularities. If the resolution of singularities is known to exist for a particular variety, then the arguments carry over. More generally, Theorem~\ref{thm:proper_surjective_descent} can be applied to any proper dominant morphism between Noetherian integral $J\textrm{-}2$ schemes. For instance, a proper birational morphism between varieties over a field.
\end{remark}

\begin{example}
    Recall an exact functor $F\colon\mathcal{T} \to \mathcal{S}$ between triangulated categories is \textbf{essentially dense} if for each $S\in \mathcal{S}$ there exists $T\in \mathcal{T}$ such that $S$ is a retract of $F(T)$. Suppose $\pi \colon Y \to X$ is a proper birational morphism of projective varieties over a field such that $\mathbb{R}\pi_\ast \mathcal{O}_Y$ is isomorphic to $\mathcal{O}_X$ in $D_{\operatorname{Qcoh}}(X)$. This happens when $X$ has rational singularities and $Y$ is smooth. The functor $\mathbb{R}\pi_\ast \colon D^b_{\operatorname{coh}}(Y)\to D^b_{\operatorname{coh}}(X)$ is essentially dense via \cite[Lemma 7.4]{Kawamata:2022}.
\end{example}

\subsection{Proofs}
\label{sec:classical_descent_results}

To exhibit Theorem~\ref{thm:proper_surjective_descent}, we start by understanding how local behavior influences global behavior of objects finitely building one another.

\begin{lemma}\label{lem:generate_generically} 
    Suppose that $j\colon U \to X$ is an open subscheme, and let $E,P \in
    D^b_{\operatorname{coh}}( X)$. If $\mathbb{L}j^\ast E \in  \langle \mathbb{L}j^\ast P \rangle$ in $D^b_{\operatorname{coh}}( U)$, then there exists $C \in D^b_{\operatorname{coh},Z}( X)$ such that $E \in \langle P \oplus C \rangle$ in $ D^b_{\operatorname{coh}}( X)$ where $Z = X \setminus U$. 
\end{lemma}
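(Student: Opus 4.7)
The plan is to exploit the Verdier localization sequence
\[
D^b_{\operatorname{coh},Z}(X) \to D^b_{\operatorname{coh}}(X) \xrightarrow{\mathbb{L}j^\ast} D^b_{\operatorname{coh}}(U)
\]
recalled earlier in the excerpt. Let $\mathcal{A}$ denote the smallest thick subcategory of $D^b_{\operatorname{coh}}(X)$ containing $P$ together with all of $D^b_{\operatorname{coh},Z}(X)$. The first reduction is that it suffices to show $E\in\mathcal{A}$: if so, then $E$ is built in finitely many cone, shift, and retract operations from $P$ and some finite list $C_1,\dots,C_k \in D^b_{\operatorname{coh},Z}(X)$, so setting $C := C_1\oplus\cdots\oplus C_k$ makes $P$ and each $C_i$ a retract of $P\oplus C$, yielding $E\in\langle P\oplus C\rangle$ with $C$ supported on $Z$, as required.

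To show $E\in\mathcal{A}$, I will use the bijective correspondence between thick subcategories of $D^b_{\operatorname{coh}}(U)$ and thick subcategories of $D^b_{\operatorname{coh}}(X)$ containing the kernel $D^b_{\operatorname{coh},Z}(X)$, which is a standard consequence of $\mathbb{L}j^\ast$ being a Verdier localization (in particular, essentially surjective). The image $\mathbb{L}j^\ast(\mathcal{A})$ is a thick subcategory of $D^b_{\operatorname{coh}}(U)$ containing $\mathbb{L}j^\ast P$, so $\mathbb{L}j^\ast(\mathcal{A})\supseteq\langle\mathbb{L}j^\ast P\rangle$; since $\mathcal{A}$ contains the kernel, the correspondence shows that $\mathcal{A}$ equals the full preimage of $\mathbb{L}j^\ast(\mathcal{A})$, and the minimality of $\mathcal{A}$ forces $\mathbb{L}j^\ast(\mathcal{A})=\langle\mathbb{L}j^\ast P\rangle$. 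The hypothesis $\mathbb{L}j^\ast E\in\langle\mathbb{L}j^\ast P\rangle$ therefore places $E$ in the preimage $(\mathbb{L}j^\ast)^{-1}(\langle\mathbb{L}j^\ast P\rangle) = \mathcal{A}$, which is what we needed.

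The main obstacle is establishing the correspondence cleanly; I expect either to cite it as a standard fact about Verdier localizations \cite{Krause:2022}, or to give a hands-on induction on the smallest $n$ with $\mathbb{L}j^\ast E\in\langle\mathbb{L}j^\ast P\rangle_n$. In the hands-on version, the base case $n=1$ lifts a retract splitting $\mathbb{L}j^\ast E \oplus F \cong \bigoplus_i \mathbb{L}j^\ast P[a_i]$ through essential surjectivity of $\mathbb{L}j^\ast$ and represents the resulting isomorphism by a roof $E\oplus F' \xleftarrow{s} W \xrightarrow{g} \bigoplus_i P[a_i]$ whose cones lie in $D^b_{\operatorname{coh},Z}(X)$; the inductive step lifts the defining distinguished triangle $A \to \mathbb{L}j^\ast E \oplus X' \to B\to A[1]$ in the same fashion, and the cones of localizing morphisms that accumulate along the way aggregate into a single object $C\in D^b_{\operatorname{coh},Z}(X)$ together with which $\langle P \oplus C\rangle$ contains $E$.
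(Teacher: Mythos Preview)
Your proposal is correct and follows essentially the same route as the paper: both arguments define the thick subcategory $\mathcal{A}=\langle P \cup D^b_{\operatorname{coh},Z}(X)\rangle$, invoke Verdier's bijection between thick subcategories of the quotient and thick subcategories containing the kernel (the paper cites \cite[Proposition 2.3.1]{Verdier:1996}) to conclude $E\in\mathcal{A}$, and then extract a single $C\in D^b_{\operatorname{coh},Z}(X)$ from the finitely many support-on-$Z$ objects used in building $E$. The only cosmetic difference is that the paper spells out the extraction step as an explicit induction on $\operatorname{level}^{\mathcal{A}}(E)$, whereas you invoke the general principle that membership in $\langle\mathcal{S}\rangle$ depends on only finitely many objects of $\mathcal{S}$; your alternative hands-on induction on $\operatorname{level}^{\mathbb{L}j^\ast P}(\mathbb{L}j^\ast E)$ via roofs is a genuine variant but is not the path the paper takes.
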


\begin{proof}
    There exists an exact sequence of triangulated categories
    \begin{displaymath}
        0 \to D^b_{\operatorname{coh},Z}(X) \to  D^b_{\operatorname{coh}}( X)
        \xrightarrow{\mathbb{L} j^\ast}  D^b_{\operatorname{coh}}( U) \to 0, 
        \end{displaymath}
    and it allows one to view $D^b_{\operatorname{coh}}(U)$ as the Verdier
    quotient $D^b_{\operatorname{coh}}( X)/ D^b_{\operatorname{coh},Z}(X)$. From \cite[Proposition 2.3.1]{Verdier:1996}, the thick subcategories of
    $D^b_{\operatorname{coh}}(U)$ are in one-to-one correspondence with those in
    $D^b_{\operatorname{coh}}(X)$ containing $D^b_{\operatorname{coh},Z}(X)$. Let
    $\mathcal{T}$ denote $\langle P \oplus D^b_{\operatorname{coh},Z}(X)\rangle $ where $P \oplus D^b_{\operatorname{coh},Z}(X)$ is
    the full subcategory of $D^b_{\operatorname{coh}}(X)$ whose objects are the
    form $P \oplus H$ with $H\in D^b_{\operatorname{coh},Z}(X)$. Furthermore,
    denote by $\mathcal{T}_n$ for $\langle P \oplus
    D^b_{\operatorname{coh},Z}(X)\rangle_n$. By the construction of $\mathcal{T}$, one
    has $D^b_{\operatorname{coh},Z}(X) \subseteq \mathcal{T}$, and so its image
    under $\mathbb{L} j^\ast$ corresponds to a thick subcategory of $D^b
    (\operatorname{coh}U)$. Furthermore, the image of $\mathcal{T}$ under the
    exact functor $\mathbb{L} j^\ast: D^b_{\operatorname{coh}}(X)\to D^b
    (\operatorname{coh}U)$ lies inside of $\langle
    \mathbb{L} j^\ast P \rangle$. Observe that $\mathbb{L} j^\ast \mathcal{T} \subseteq
    D^b_{\operatorname{coh}}(U)$ is a thick subcategory and it contains
    $\mathbb{L}j^\ast P$, so there exists the reverse inclusion, i.e.
    $\mathbb{L}j^\ast \mathcal{T} = \langle \mathbb{L}
    j^\ast P \rangle$. To finish, this may be done by induction on
    $\operatorname{level}^\mathcal{T}  (-)$. If
    $\operatorname{level}^\mathcal{T}  (E)=1$, then $E$ is a direct
    summand of an object of the form $\bigoplus_{n \in \mathbb{Z}} (P^{\oplus
    r_n} \oplus H_n )[n]$ where $H_n \in D^b_{\operatorname{coh},Z}(X)$. Hence,
    $E$ is an element of $\langle P \oplus (\bigoplus_{n
    \in\mathbb{Z}} H_n) \rangle$, and since $\bigoplus_{n\in \mathbb{Z}} H_n \in D^b_{\operatorname{coh},Z}(X)$, this promises the base case holds. Suppose for any
    $H \in \mathcal{T}$ with $\operatorname{level}^\mathcal{T}  (H) < n$
    the claim holds. Choose $E\in \mathcal{T}_n$. There exists a distinguished
    triangle 
    \begin{displaymath}
        A \to \widetilde{E} \to B \to A[1]
    \end{displaymath}
    where $A \in \mathcal{T}_{n-1}, B \in \mathcal{T}_1$ and $E$ is a direct
    summand of $\widetilde{E}$. The induction step guarantees there exists
    $C_A,C_B \in D^b_{\operatorname{coh},Z}(X)$ such that $ A \in \langle  P \oplus C_A \rangle$ and $B \in \langle P \oplus C_B \rangle$. Therefore, tying this together it follows that $\widetilde{E} \in \langle P \oplus C_A \oplus C_B\rangle$, and hence, so does $E$. This completes the proof.
\end{proof}

\begin{proposition} \label{prop:structure_sheaves_generate} 
    Consider a Noetherian scheme $X$, and choose an integer $l\geq 0$. If $\mathcal{Z}$ denotes the full subcategory with objects $\mathbb{R}i_\ast \mathcal O_Z$ in $D^b_{\operatorname{coh}}(X)$ for all closed immersions $i\colon Z \to X$ with $Z$ a closed integral subscheme with $\dim Z \leq l$, then collection of objects $E$ in $D^b_{\operatorname{coh}}( X)$ such that $\dim \operatorname{supp} E \leq l$ belongs to the smallest thick subcategory generated by $\mathcal{Z}$.
\end{proposition}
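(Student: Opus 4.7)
The plan is a two-stage induction: first reduce via truncation to a single coherent sheaf, then carry out Noetherian induction on the reduced support.

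Every bounded complex $E$ is finitely built from its cohomology sheaves through the canonical truncation triangles $\tau^{\leq k-1}E \to \tau^{\leq k}E \to \mathcal{H}^k(E)[-k]$, and $\dim\operatorname{supp}\mathcal{H}^k(E) \leq \dim\operatorname{supp}(E) \leq l$ for every $k$. So I reduce to showing that every coherent sheaf $F$ with $\dim \operatorname{supp}(F)\leq l$ lies in $\langle \mathcal{Z}\rangle$. I then proceed by Noetherian induction on the reduced support $T := \operatorname{supp}(F)_{\mathrm{red}}$, the empty case being trivial.

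Pick an irreducible component $T_1$ of $T$ (integral, of dimension $\leq l$) and let $T''$ be the union of the remaining components of $T$, a proper closed subset of $T$. Set $U := X \setminus T''$, $j \colon U \hookrightarrow X$, and $W := T_1 \cap U$, which is integral. The restriction $F|_U$ is annihilated by some power of the ideal sheaf $\mathcal{I}_W$ of $W$ in $U$, so the filtration by the subsheaves $\mathcal{I}_W^k F|_U$ expresses $F|_U$ as a finite extension of pushforwards from $W$ of coherent sheaves $G$. For each such $G$, generic freeness on integral $W$ supplies a dense open $V \subseteq W$ with $G|_V \cong \mathcal{O}_V^{\oplus r}$; applying Lemma~\ref{lem:generate_generically} inside $W$ to the immersion $V \hookrightarrow W$ gives $G \in \langle \mathcal{O}_W \oplus C_G\rangle$ with $C_G$ supported on $W \setminus V$. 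Assembling and pushing forward to $U$ produces
\[
F|_U \in \langle (i_W)_\ast \mathcal{O}_W \oplus C \rangle \text{ in } D^b_{\operatorname{coh}}(U)
\]
for some $C$ supported on $W \setminus V \subsetneq W$.

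Since $\mathbb{L}j^\ast \colon D^b_{\operatorname{coh}}(X) \to D^b_{\operatorname{coh}}(U)$ is a Verdier localization, it is essentially surjective and I choose $C' \in D^b_{\operatorname{coh}}(X)$ with $\mathbb{L}j^\ast C' \cong C$. Flat base change along $j$ gives $\mathbb{L}j^\ast (i_{T_1})_\ast \mathcal{O}_{T_1} \cong (i_W)_\ast \mathcal{O}_W$, whence $\mathbb{L}j^\ast F \in \langle \mathbb{L}j^\ast ((i_{T_1})_\ast \mathcal{O}_{T_1} \oplus C')\rangle$. Applying Lemma~\ref{lem:generate_generically} on $X$ with $Z = T''$ then yields
\[
F \in \langle (i_{T_1})_\ast \mathcal{O}_{T_1} \oplus C' \oplus D \rangle
\]
for some $D \in D^b_{\operatorname{coh}, T''}(X)$. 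Now $(i_{T_1})_\ast \mathcal{O}_{T_1}$ lies in $\mathcal{Z}$; $\operatorname{supp}(D) \subseteq T'' \subsetneq T$; and $\operatorname{supp}(C') \subseteq \overline{W\setminus V}^X \cup T''$, a proper closed subset of $T$, because the generic point of $T_1$ (which lies in $V$) is excluded. Both $C'$ and $D$ therefore fall under the inductive hypothesis, completing the proof.

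The delicate point, which I expect to be the main obstacle, is controlling the support of the lift $C'$ after pulling back from $U$ to $X$: one must verify that any such lift has support strictly inside $T$. This hinges on the density of $V$ in $W$, supplied by generic freeness, ensuring that the closure of $W\setminus V$ in $X$ misses the generic point of $T_1$ and hence sits strictly inside $T$, so that the Noetherian induction actually applies.
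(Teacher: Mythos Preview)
Your argument is correct. The overall strategy---reduce to coherent sheaves via truncation, then use generic freeness to peel off the top-dimensional piece and induct on the support---matches the paper, but the execution differs in two respects. First, you run Noetherian induction on the reduced support and strip off one irreducible component at a time by passing to the open complement $U = X\setminus T''$, whereas the paper inducts on $\dim\operatorname{supp}(E)$ and first reduces to a sheaf scheme-theoretically supported on a single integral closed subscheme. Second, and more substantively, you invoke Lemma~\ref{lem:generate_generically} twice as a black box (once on $W$ to handle generic freeness, once on $X$ to lift back from $U$), while the paper instead unwinds the Verdier-quotient description directly via an explicit roof $\mathcal{O}_Z^{\oplus r}\xleftarrow{g} S \xrightarrow{h} E'$ and the octahedral axiom to show both cones are supported on the complement of the free locus. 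Your route is more modular and avoids the octahedral diagram; the paper's is more self-contained. One small point of hygiene: your inductive hypothesis is stated for coherent \emph{sheaves} with support $\subsetneq T$, but you then apply it to the complexes $C'$ and $D$; this is harmless since you can pass to their cohomology sheaves (each supported in the same proper closed subset), but it would be cleaner to phrase the Noetherian induction directly for all objects of $D^b_{\operatorname{coh}}(X)$ supported in $T$.
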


\begin{proof}
    It is enough to show $\operatorname{coh}_{\leq l} X \subseteq
    \langle \mathcal{Z} \rangle$ where $\operatorname{coh}_{\leq
    l} X$ is the full subcategory of coherent sheaves of $E$ with $\dim
    \operatorname{supp} E \leq l$. Indeed, for any bounded complex $E\in D^b
    (\operatorname{coh} X)$, one has $E \in \langle
    \{\mathcal{H}^i(E) : i \in \mathbb{Z}\} \rangle$. Fix $E \in
    \operatorname{coh}_{\leq l} X$. Note that $\operatorname{supp}(E)$ is a
    closed subset of $X$, and as $X$ is Noetherian, $\operatorname{supp}(E)$ has
    finitely many irreducible components, say $\operatorname{supp}(E) =
    \bigcup^n_{i=1} Z_i$. Each $Z_j$ has dimension at most $l$. An induction argument on the number of irreducible components of the support allows one to reduce to the case $E$ is supported on a closed integral subscheme of $X$ of dimension at most $l$. We to show if one can finitely build all coherent sheaves whose support is contained in
    a closed integral subscheme of $X$ with dimension at most
    $\dim\operatorname{supp}(E)$, then one can finitely build $E$. Indeed, this
    can be shown by induction on $\dim\operatorname{supp}(E)$. If
    $\dim\operatorname{supp}(E) = 0$, then one can reduce to $E$ being
    pushed-forward from $\kappa(p)$ for some closed point $p\in X$. However, any
    such $E$ is a finite direct sum of shifts of $\kappa(p)$-vector spaces, and
    so it can be finitely built by $\kappa(p)$. Since $\kappa(p) \in
    \mathcal{Z}$, the base case follows.
    
    Assume all coherent sheaves in $D^b_{\operatorname{coh}}(X)$ whose support has dimension at most $l$ have been finitely built by $\mathcal{Z}$. Choose $E\in D^b_{\operatorname{coh}}(X)$ such that $\dim\operatorname{supp}(E)=l+1$ and is scheme theoretically supported
    on a closed integral subscheme $i\colon Z \to X$. There exists $E^\prime \in \operatorname{coh}Z$ such that $i_\ast E^\prime \cong E$ in $D^b_{\operatorname{coh}}(X)$. By generic freeness for coherent sheaves on a Noetherian integral scheme, we can find an open affine immersion $t\colon U\to Z$ and a quasi-isomorphism $j \colon \mathbb{L}t^\ast \mathcal{O}_Z^{\oplus r} \to \mathbb{L}t^\ast E^\prime$ in $D^b_{\operatorname{coh}}(U)$. Note that $U$ is nonempty as it contains the generic point of $Z$. Hence, there exists a complex $S$ and a roof in $D^b_{\operatorname{coh}}( Z)$,
    \begin{equation}\label{eq:thick_support_roof}
        \begin{tikzcd}
            & S \\
            {\mathcal{O}_Z^{\oplus r}} && {E^\prime}
            \arrow["g"', from=1-2, to=2-1]
            \arrow["h", from=1-2, to=2-3]
        \end{tikzcd}
    \end{equation}
    such that the cone of $g$ is supported on the closed subscheme $Z \setminus U$ in $Z$ and $j = \mathbb{L}t^\ast h \circ (\mathbb{L}t^\ast g)^{-1}$ in $D^b_{\operatorname{coh}}(U)$. Applying the functor $\mathbb{L}i^\ast$ to the roof in Diagram~\ref{eq:thick_support_roof}, the octahedral axiom for triangulated categories ensures there exists a commutative diagram in $D^b_{\operatorname{coh}}(U)$:
    \begin{equation}\label{eq:thick_support_octahedral}
        \begin{tikzcd}
            {\mathbb{L}t^\ast S} & {\mathbb{L}t^\ast \mathcal{O}_Z^{\oplus r}} & {\mathbb{L}t^\ast \operatorname{cone}(g)} \\
            & {\mathbb{L}t^\ast E^\prime} \\
            {\mathbb{L}t^\ast \operatorname{cone}(g)[1]} & 0 & {\mathbb{L}t^\ast \operatorname{cone}(h)}
            \arrow["{\mathbb{L}t^\ast g}", from=1-1, to=1-2]
            \arrow["{\mathbb{L}t^\ast h}"', from=1-1, to=2-2]
            \arrow["j"', from=1-2, to=2-2]
            \arrow[from=1-2, to=1-3]
            \arrow[from=2-2, to=3-2]
            \arrow[from=2-2, to=3-3]
            \arrow["\exists", from=1-3, to=3-3]
            \arrow["\exists", from=3-3, to=3-2]
            \arrow[from=3-2, to=3-1]
        \end{tikzcd}
    \end{equation}
    Further, Diagram~\ref{eq:thick_support_octahedral} yields a distinguished triangle 
    \begin{displaymath}
        \mathbb{L}t^\ast \operatorname{cone}(g) \to \mathbb{L}t^\ast \operatorname{cone}(h) \to 0 \to \mathbb{L}t^\ast \operatorname{cone}(g)[1].
    \end{displaymath}
    If $\operatorname{cone}(g)$ is supported on $Z\setminus U$, then $\mathbb{L}t^\ast \operatorname{cone}(g)\cong 0$ in $D^b_{\operatorname{coh}}(U)$, and so the distinguished triangle above promises that $\mathbb{L}t^\ast \operatorname{cone}(h)\cong 0$. Hence, $\operatorname{cone}(h)$ is supported on $Z\setminus U$. 
    
    Next up, we show that we can finitely build $E$ from $\mathcal{Z}$ in $D^b_{\operatorname{coh}}(X)$. There exists a distinguished triangle
    \begin{displaymath}
        S  \xrightarrow{g} \mathcal{O}_Z^{\oplus r}\to 
        \operatorname{cone}(g) \to S[1].
    \end{displaymath}
    Let $\mathcal{Z}^\prime$ denote the full subcategory of objects $p_\ast \mathcal{O}_S\in D^b_{\operatorname{coh}}(Z)$ for $S$ a closed integral subscheme of $Z$ with Krull dimension at most $l+1$. Since $\operatorname{cone}(g)$ is supported in dimension lower than $\operatorname{dim}(Z)$, the inductive step implies that $S$ is finitely built by $\mathcal{Z}^\prime$. However, $\operatorname{cone}(h)$ is supported dimension less than $\dim Z$ as well, so it must be finitely built by $\mathcal{Z}^\prime$. From the distinguished triangle
    \begin{displaymath}
        S  \xrightarrow{h} E^\prime \to 
        \operatorname{cone}(h) \to S[1],
    \end{displaymath}
    we have $E^\prime$ is finitely built by $S$ and $\operatorname{cone}(h)$, and hence, via $\mathcal{Z}^\prime$. After applying the functor $i_\ast \colon D^b_{\operatorname{coh}}(Z)\to D^b_{\operatorname{coh}}(X)$, we see that $E$ is finitely built by $i_\ast \mathcal{Z}^\prime$. But $i_\ast \mathcal{Z}^\prime$ is a subcategory of $\mathcal{Z}$, and so this completes the proof by induction.
\end{proof}

\begin{corollary}\label{cor:strucutre_sheaves_classically_generate}
    Let $X$ be a Noetherian scheme of finite Krull dimension. If $\mathcal{Z}$ denotes the collection of structures sheaves $\mathbb{R}i_\ast \mathcal{O}_Z$ where $i\colon Z\to X$ is a closed integral subscheme, then $D^b_{\operatorname{cohX}}(X) =\langle \mathcal{Z} \rangle$.
\end{corollary}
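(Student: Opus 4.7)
The plan is to reduce this statement directly to Proposition~\ref{prop:structure_sheaves_generate}, which already contains all the substantive content. Since $X$ is assumed to have finite Krull dimension, I would simply set $l := \dim X < \infty$ and observe that for every $E \in D^b_{\operatorname{coh}}(X)$, the support $\operatorname{supp}(E)$ is a closed subset of $X$, so $\dim \operatorname{supp}(E) \leq \dim X = l$.

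With this choice of $l$, Proposition~\ref{prop:structure_sheaves_generate} places $E$ inside the smallest thick subcategory generated by the objects $\mathbb{R}i_\ast \mathcal{O}_Z$ over closed integral subschemes $i \colon Z \to X$ with $\dim Z \leq l$. Every such generating object lies in the collection $\mathcal{Z}$, so $E \in \langle \mathcal{Z} \rangle$. This establishes the inclusion $D^b_{\operatorname{coh}}(X) \subseteq \langle \mathcal{Z} \rangle$. The reverse inclusion is immediate because each $\mathbb{R}i_\ast \mathcal{O}_Z$ is an object of $D^b_{\operatorname{coh}}(X)$, so any thick subcategory they generate is automatically contained in $D^b_{\operatorname{coh}}(X)$.

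I do not anticipate any genuine technical obstacle; the induction on dimension and the generic freeness argument were the difficult steps, and they were handled in the proof of Proposition~\ref{prop:structure_sheaves_generate}. The sole purpose of the corollary is to exploit the hypothesis $\dim X < \infty$ so that a single uniform dimension bound $l$ works for every object of $D^b_{\operatorname{coh}}(X)$ simultaneously, thereby removing the dimension cutoff from the generating family.
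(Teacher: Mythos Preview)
Your proposal is correct and follows essentially the same approach as the paper: both arguments reduce immediately to Proposition~\ref{prop:structure_sheaves_generate} by using the finite Krull dimension hypothesis to obtain a uniform bound $l = \dim X$ on the support dimension of every object in $D^b_{\operatorname{coh}}(X)$. Your write-up is in fact slightly more explicit than the paper's one-sentence proof, but the content is the same.
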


\begin{proof}
    If $X$ is has finite Krull dimension, then every $E\in D^b_{\operatorname{coh}}(X)$ is scheme-theoretically supported on a closed subscheme with finite Krull dimension, and so the claims follows immediately via Proposition~\ref{prop:structure_sheaves_generate}.
\end{proof}

\begin{proposition}\label{prop:classical_generator_components}
    Let $X$ be a Noetherian scheme of finite Krull dimension. If $X=\bigcup^n_{i=1} Z_i$ denotes the maximal irreducible components and $G_i \in D^b_{\operatorname{coh}}(Z_i)$ is a classical generator for each $1\leq i \leq n$, then $\oplus_{i=1}^n \mathbb{R} \pi_{i,\ast} G_i$ is a classical generator for $D^b_{\operatorname{coh}}(X)$ where $\pi_i \colon Z_i \to X$ is the closed immersion.
\end{proposition}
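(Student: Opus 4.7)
The plan is to use Corollary~\ref{cor:strucutre_sheaves_classically_generate} to reduce the question to one about structure sheaves of closed integral subschemes, and then exploit the fact that every closed integral subscheme of $X$ must sit inside one of the maximal irreducible components $Z_i$. The obstacle here is very mild: the main work is already absorbed in Corollary~\ref{cor:strucutre_sheaves_classically_generate}, so only a short factorization argument remains.

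First I would let $\mathcal{Z}$ denote the collection of objects $\mathbb{R}j_\ast \mathcal{O}_W$ where $j\colon W \to X$ is a closed immersion of a closed integral subscheme of $X$. By Corollary~\ref{cor:strucutre_sheaves_classically_generate}, we have $D^b_{\operatorname{coh}}(X) = \langle \mathcal{Z}\rangle$. Thus it suffices to show that every object of $\mathcal{Z}$ lies in $\langle \bigoplus_{i=1}^n \mathbb{R}\pi_{i,\ast} G_i\rangle$.

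Next, fix such a closed immersion $j\colon W \to X$. Since $W$ is irreducible and $X = \bigcup_{i=1}^n Z_i$, the image of $W$ is contained in some component $Z_i$ set-theoretically; because $W$ is integral and $Z_i$ carries the reduced induced structure, this inclusion upgrades to a scheme-theoretic closed immersion $j'\colon W \to Z_i$ with $j = \pi_i \circ j'$. Consequently,
\begin{displaymath}
    \mathbb{R}j_\ast \mathcal{O}_W \cong \mathbb{R}\pi_{i,\ast} \mathbb{R}j'_\ast \mathcal{O}_W,
\end{displaymath}
and the object $\mathbb{R}j'_\ast \mathcal{O}_W$ belongs to $D^b_{\operatorname{coh}}(Z_i)$. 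Since $G_i$ is a classical generator of $D^b_{\operatorname{coh}}(Z_i)$, we have $\mathbb{R}j'_\ast \mathcal{O}_W \in \langle G_i\rangle$.

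Finally, since $\mathbb{R}\pi_{i,\ast}\colon D^b_{\operatorname{coh}}(Z_i) \to D^b_{\operatorname{coh}}(X)$ is an exact functor (using that $\pi_i$ is a closed, hence proper, immersion), it sends thick subcategory relations to thick subcategory relations, giving $\mathbb{R}j_\ast \mathcal{O}_W \in \langle \mathbb{R}\pi_{i,\ast} G_i\rangle \subseteq \langle \bigoplus_{i=1}^n \mathbb{R}\pi_{i,\ast} G_i\rangle$. As $j$ was arbitrary among closed immersions of closed integral subschemes, this shows $\mathcal{Z} \subseteq \langle \bigoplus_{i=1}^n \mathbb{R}\pi_{i,\ast} G_i\rangle$, and combining with Corollary~\ref{cor:strucutre_sheaves_classically_generate} completes the proof.
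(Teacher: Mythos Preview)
Your proof is correct and follows essentially the same approach as the paper: both reduce to showing that $\bigoplus_i \mathbb{R}\pi_{i,\ast} G_i$ finitely builds every structure sheaf $\mathbb{R}j_\ast \mathcal{O}_W$ of a closed integral subscheme, and then invoke Corollary~\ref{cor:strucutre_sheaves_classically_generate}. The paper's proof is a terse one-liner that omits the factorization $j = \pi_i \circ j'$ you spell out, so your version simply makes explicit what the paper leaves to the reader.
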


\begin{proof}
    The object $G$ finitely builds any structure sheaf $i_\ast \mathcal{O}_Z$ with $i \colon Z \to X$ a closed immersion from a closed integral subscheme. Hence, we conclude by appealing to Corollary~\ref{cor:strucutre_sheaves_classically_generate}.
\end{proof}

\begin{proof}[Proof of Theorem~\ref{thm:proper_surjective_descent}]
    Let us exhibit the claim by induction on the Krull dimension of $\dim S$. If $\dim S = 0$, then it suffices to show that $\mathbb{R}\pi_\ast G$ finitely builds each closed point in $S$. Fix a closed point $s\in S$. Let $i\colon \operatorname{Spec}(\kappa(s)) \to S$ denote the closed immersion. There exists a fibered square of schemes:
    \begin{equation}\label{eq:descent_proper_surjective_dim_zero}
        \begin{tikzcd}
            {Y\times_S \operatorname{Spec}(\kappa(s))} & {\operatorname{Spec}(\kappa(s))} \\
            Y & S.
            \arrow["{\pi_2}", from=1-1, to=1-2]
            \arrow["i", from=1-2, to=2-2]
            \arrow["{\pi_1}"', from=1-1, to=2-1]
            \arrow["\pi"', from=2-1, to=2-2]
        \end{tikzcd}
    \end{equation}
    Recall that a proper surjection morphism is stable under base change, so $\pi_2$ is proper surjective. Furthermore, closed immersions are stable under base change, so $\pi_1 \colon Y\times_S \operatorname{Spec}(\kappa(s)) \to Y$ is a closed immersion. If $Y$ is $J\textrm{-}2$ and Noetherian, then so is $Y\times_S \operatorname{Spec}(\kappa(s))$. Let $G^\prime$ be a classical generator for $D^b_{\operatorname{coh}}(Y\times_S \operatorname{Spec}(\kappa(s)))$. As $\kappa(s)$ is a field, we have $\mathbb{R}\pi_{2,\ast} G^\prime$ is isomorphic to a bounded complex of the form $\bigoplus_{n\in \mathbb{Z}} \mathcal{O}_{\operatorname{Spec}(\kappa(s))}^{\oplus r_n} [n]$ whose differential is zero. In particular, this promises that $\mathcal{O}_{\operatorname{Spec}(\kappa(s))}$ is a direct summand of $\mathbb{R}\pi_{2,\ast} G^\prime$ in $D^b_{\operatorname{coh}} (\operatorname{Spec}(\kappa(s)) )$. Applying the functor $i_\ast \colon D^b_{\operatorname{coh}}(\operatorname{Spec}(\kappa(s))) \to D^b_{\operatorname{coh}}(S)$, it follows that $i_\ast \mathcal{O}_{\operatorname{Spec}(\kappa(s))}$ is a direct summand of $i_\ast \mathbb{R}\pi_{2,\ast} G^\prime$ in $D^b_{\operatorname{coh}}(S)$. However, Equation~\ref{eq:descent_proper_surjective_dim_zero} ensures $i_\ast \mathbb{R}\pi_{2,\ast} G^\prime= \mathbb{R}\pi_\ast \pi_{1,\ast} G^\prime$ in $D^b_{\operatorname{coh}}(S) $. If $G$ finitely builds $\pi_{1,\ast} G^\prime$ in $D^b_{\operatorname{coh}}(Y)$, then $\mathbb{R}\pi_\ast G$ finitely builds $\mathbb{R}\pi_\ast \pi_{1,\ast} G^\prime$ in $D^b_{\operatorname{coh}}(S)$. Hence, $\mathbb{R}\pi_\ast G$ finitely builds $i_\ast \mathcal{O}_{\operatorname{Spec}(\kappa(s))}$ in $D^b_{\operatorname{coh}}(S)$. By Corollary~\ref{cor:strucutre_sheaves_classically_generate}, $\mathbb{R}\pi_\ast G$ is a classical generator for $D^b_{\operatorname{coh}}(S)$, which establishes the base case.

    Before proving the inductive step, let us reduce to the case where $S$ is an integral scheme. Let $i\colon Z \to S$ be the closed immersion associated to a maximal irreducible component of $S$. Consider the fibered square of schemes:
    \begin{equation}\label{eq:descent_proper_surjective_integral_step}
        \begin{tikzcd}
            {Y\times_S Z} & {Z} \\
            Y & S.
            \arrow["{\pi_2}", from=1-1, to=1-2]
            \arrow["i", from=1-2, to=2-2]
            \arrow["{\pi_1}"', from=1-1, to=2-1]
            \arrow["\pi"', from=2-1, to=2-2]
        \end{tikzcd}
    \end{equation}
    Recall closed immersions and proper surjective morphisms are stable under base change. Assume the claim holds for each maximal irreducible component of $S$. If $Y$ is $J\textrm{-}2$ and Noetherian, then so is $Y\times_S Z$. Let $G^\prime$ be a classical generator for $D^b_{\operatorname{coh}}(Y\times_S Z)$. The assumption promises $\mathbb{R}\pi_{2,\ast} G^\prime$ is a classical generator for $D^b_{\operatorname{coh}}(Z)$. From Equation~\ref{eq:descent_proper_surjective_dim_nonzero}, we know $i_\ast \mathbb{R}\pi_{2,\ast} G^\prime = \mathbb{R}\pi_\ast \pi_{2,\ast} G^\prime$, and this object is finitely built by $\mathbb{R}\pi_\ast G$ as $G$ is a classical generator for $D^b_{\operatorname{coh}}(Y)$. Since this holds for all maximal irreducible component of $X$, Proposition~\ref{prop:classical_generator_components} promises $\mathbb{R}\pi_\ast G$ is a classical generator for $D^b_{\operatorname{coh}}(S)$. Hence, this allows us to reduce to the case $S$ is integral.

    Next, we prove the inductive step. Suppose there exists an integer $d\geq 0$ such that the claim holds for any proper surjective morphism $\pi^\prime \colon Y^\prime \to S^\prime$ of Noetherian $J\textrm{-}2$ schemes of finite Krull dimension with $\dim S^\prime \leq d$. Let $\dim S = d+1$. Choose $E\in D^b_{\operatorname{coh}}(S)$. If $\dim \operatorname{Supp}(E) < d+1$, then $\mathbb{R}\pi_\ast G$ finitely builds $E$ in $D^b_{\operatorname{coh}}(S)$, and this can be seen by a similar argument to the base case. Indeed, there exists a closed immersion $i\colon W \to S$ such that $E$ is scheme-theoretically supported on $W$ and $W\subseteq \operatorname{Supp}(E)$. There exists a fibered square of schemes:
    \begin{equation}\label{eq:descent_proper_surjective_dim_nonzero}
        \begin{tikzcd}
            {Y\times_S W} & {W} \\
            Y & S.
            \arrow["{\pi_2}", from=1-1, to=1-2]
            \arrow["i", from=1-2, to=2-2]
            \arrow["{\pi_1}"', from=1-1, to=2-1]
            \arrow["\pi"', from=2-1, to=2-2]
        \end{tikzcd}
    \end{equation}
    Recall that a proper surjection morphism is stable under base change, so $\pi_2$ is proper surjective. Furthermore, closed immersions are stable under base change, so $\pi_1 \colon Y\times_S W \to Y$ is a closed immersion. If $Y$ is $J\textrm{-}2$ and Noetherian, then so is $Y\times_S W$. Let $G^\prime$ be a classical generator for $D^b_{\operatorname{coh}}(Y\times_S W)$. If $\dim W < d+1$, then the induction hypothesis promises that $\mathbb{R}\pi_{2,\ast} G^\prime$ is a classical generator for $D^b_{\operatorname{coh}}(W)$. Since $E$ is scheme-theoretically supported on $W$, there exists $E^\prime \in D^b_{\operatorname{coh}}(W)$ such that $i_\ast E^\prime\cong E$ in $D^b_{\operatorname{coh}}(S)$. However, $\mathbb{R}\pi_{2,\ast} G^\prime$ finitely builds $E^\prime$ in $D^b_{\operatorname{coh}}(W)$, and so $i_\ast \mathbb{R}\pi_{2,\ast} G^\prime$ finitely builds $i_\ast E^\prime$ in $D^b_{\operatorname{coh}}(S)$. From Equation~\ref{eq:descent_proper_surjective_dim_nonzero}, we know $i_\ast \mathbb{R}\pi_{2,\ast} G^\prime = \mathbb{R}\pi_\ast \pi_{2,\ast} G^\prime$, and this object is finitely built by $\mathbb{R}\pi_\ast G$ since $G$ is a classical generator for $D^b_{\operatorname{coh}}(Y)$. Hence, $E$ is finitely built by $\mathbb{R}\pi_\ast G$ in $D^b_{\operatorname{coh}}(S)$.

    Lastly, we need to check the case when $E$ has full support. Since $S$ is $J\textrm{-}2$, the regular locus of $S$ is open. As $Y$ is integral, the regular locus contains the generic point. Let $j\colon U \to S$ be an open immersion with $U$ nonempty, affine, and contained in the regular locus of $S$. Note that $\pi$ being surjective ensures this is a dominant morphism, and so $\mathbb{R}\pi_\ast G$ has full support. Therefore, the support of $\mathbb{L}j^\ast \mathbb{R}\pi_\ast G$ is all of $U$, and so \cite[Theorem 1.5]{Neeman:1992} ensures $\mathbb{L}j^\ast \mathbb{R}\pi_\ast G$ is a classical generator for $D^b_{\operatorname{coh}}(U)$. Hence, $\mathbb{L}j^\ast \mathbb{R}\pi_\ast G$ finitely builds $\mathbb{L}j^\ast E$ in $D^b_{\operatorname{coh}}(U)$. By Lemma~\ref{lem:generate_generically}, there exists $E^\prime \in D^b_{\operatorname{coh},Z} (S)$ such that $\mathbb{R}\pi_\ast G \oplus E^\prime$ finitely build $E$ in $D^b_{\operatorname{coh}}(S)$ where $Z=S \setminus U$. However, as $S$ is integral, $\dim Z < d+1$. This means $E^\prime$ is scheme-theoretically supported on a proper closed subset of $S$, and the work above promises that $\mathbb{R}\pi_\ast G$ finitely builds $E^\prime$. Therefore, $\mathbb{R}\pi_\ast G$ finitely builds $E$, which completes the proof by induction.
\end{proof}

\section{Upper bounds on Rouquier dimension}
\label{sec:bounds_on_rouquier_dimension}

Within this section, upper bounds are established on Rouquier dimension in situations where strong generation is preserved by the derived pushforward of a morphism of Noetherian schemes. Namely, the following statement is shown.

\begin{proposition}\label{prop:descend_bounds}
    Suppose $\pi \colon Y\to X$ is a proper surjective morphism of Noetherian $J\textrm{-}2$ schemes. Assume there exists $G\in D^b_{\operatorname{coh}}(Y)$ that is a strong $\bigoplus$-generator for $D_{\operatorname{Qcoh}}(Y)$. If for some $n\geq 0$ one has $D_{\operatorname{Qcoh}}(Y)=\overline{\langle G \rangle}_n$, then $\dim D^b_{\operatorname{coh}}(X)$ is at most $n\cdot \operatorname{level}^{\mathbb{R}\pi_\ast G}  (\mathcal{O}_X)$.
\end{proposition}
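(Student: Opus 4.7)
The plan is to reduce the bound directly to Theorem~\ref{thm:descent_conditions} applied to the derived adjunction $\mathbb{L}\pi^\ast \dashv \mathbb{R}\pi_\ast$ between $D_{\operatorname{Qcoh}}(X)$ and $D_{\operatorname{Qcoh}}(Y)$, and then descend from the ``big'' categories to $D^b_{\operatorname{coh}}(X)$ via Remark~\ref{rmk:strong_oplus_to_strong_gerator_time_bound}. So the first step is a verification step: since $X$ and $Y$ are Noetherian (hence quasi-compact and quasi-separated), both $D_{\operatorname{Qcoh}}(X)$ and $D_{\operatorname{Qcoh}}(Y)$ are rigidly-compactly generated tensor triangulated categories with the derived tensor product and structure sheaf as unit, and $\mathbb{L}\pi^\ast$ is a tensor-exact functor that preserves small coproducts with right adjoint $\mathbb{R}\pi_\ast$.

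Next I would set $m := \operatorname{level}^{\mathbb{R}\pi_\ast G}(\mathcal{O}_X)$, which is finite by assumption (and is the reason why properness and surjectivity enter only implicitly, through this finiteness and through the fact that $\mathbb{R}\pi_\ast$ preserves bounded coherence). By definition of level,
\begin{equation*}
    \mathcal{O}_X \in \langle \mathbb{R}\pi_\ast G \rangle_m \subseteq \overline{\langle \mathbb{R}\pi_\ast G \rangle}_m.
\end{equation*}
Combined with the hypothesis $\overline{\langle G \rangle}_n = D_{\operatorname{Qcoh}}(Y)$, the ``in particular'' clause of Theorem~\ref{thm:descent_conditions} applied to $\pi^\ast = \mathbb{L}\pi^\ast$ yields
\begin{equation*}
    \overline{\langle \mathbb{R}\pi_\ast G \rangle}_{nm} = D_{\operatorname{Qcoh}}(X).
\end{equation*}
Finally, applying Remark~\ref{rmk:strong_oplus_to_strong_gerator_time_bound} (which says that big thickenings intersect $D^b_{\operatorname{coh}}$ in the corresponding small thickenings when the generator already lies in $D^b_{\operatorname{coh}}$, noting that $\mathbb{R}\pi_\ast G \in D^b_{\operatorname{coh}}(X)$ as $\pi$ is proper) gives $\langle \mathbb{R}\pi_\ast G \rangle_{nm} = D^b_{\operatorname{coh}}(X)$. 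By the definition of Rouquier dimension, this immediately forces $\dim D^b_{\operatorname{coh}}(X) \leq nm = n \cdot \operatorname{level}^{\mathbb{R}\pi_\ast G}(\mathcal{O}_X)$.

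The argument is essentially a bookkeeping exercise once Theorem~\ref{thm:descent_conditions} and Remark~\ref{rmk:strong_oplus_to_strong_gerator_time_bound} are in hand, so there is no real obstacle; the only mild subtlety worth flagging explicitly is that $\mathbb{R}\pi_\ast G$ remains in $D^b_{\operatorname{coh}}(X)$ (so that Remark~\ref{rmk:strong_oplus_to_strong_gerator_time_bound} applies), which follows from properness of $\pi$ via \cite{Grothendieck:1960}, and that the level $\operatorname{level}^{\mathbb{R}\pi_\ast G}(\mathcal{O}_X)$ is actually finite in the situations where the conclusion is to be used (for example under the $J\textrm{-}2$ and proper surjective hypotheses this is ensured by Theorem~\ref{thm:proper_surjective_descent}, although the statement of Proposition~\ref{prop:descend_bounds} makes sense with the convention that the bound is vacuous when the level is infinite).
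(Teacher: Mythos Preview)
Your proof is correct and follows the same approach as the paper: apply the quantitative clause of Theorem~\ref{thm:descent_conditions} to the adjunction $\mathbb{L}\pi^\ast \dashv \mathbb{R}\pi_\ast$ with $m=\operatorname{level}^{\mathbb{R}\pi_\ast G}(\mathcal{O}_X)$, then descend via Remark~\ref{rmk:strong_oplus_to_strong_gerator_time_bound} using properness to keep $\mathbb{R}\pi_\ast G$ in $D^b_{\operatorname{coh}}(X)$. Your write-up is in fact a bit more careful than the paper's (you flag the finiteness-of-level issue and the role of properness explicitly), but the argument is the same.
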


Consequently, Proposition~\ref{prop:descend_bounds} can be utilized to explicitly produce upper bounds that can be expressed in terms of two familiar invariants.

\begin{corollary}\label{cor:rouquir_dimension_bounds_affine_variety}
    Suppose $X$ is an affine Noetherian scheme, and $\pi\colon \widetilde{X} \to X$ is a proper surjective morphism. If both $D^b_{\operatorname{coh}}(X)$ and $D^b_{\operatorname{coh}}(\widetilde{X})$ have finite Rouquier dimension, then for any strong generator $G\in D^b_{\operatorname{coh}}(\widetilde{X})$ with generation time $g$:
    \begin{displaymath}
        \dim D^b_{\operatorname{coh}}(X)\leq (g  + 1) \cdot \operatorname{level} ^{\mathbb{R}\pi_\ast G} (\mathcal{O}_X).
    \end{displaymath}
\end{corollary}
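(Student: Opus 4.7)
The plan is to reduce Corollary~\ref{cor:rouquir_dimension_bounds_affine_variety} to a direct application of Proposition~\ref{prop:descend_bounds} with the specific choice $n = g+1$. Set $l := \operatorname{level}^{\mathbb{R}\pi_\ast G}(\mathcal{O}_X)$. If $l = \infty$ the claim is vacuous, so I assume $l$ is finite. By the very definition of level, $\mathcal{O}_X \in \langle \mathbb{R}\pi_\ast G \rangle_l$, and since $\langle - \rangle_l \subseteq \overline{\langle - \rangle}_l$, also $\mathcal{O}_X \in \overline{\langle \mathbb{R}\pi_\ast G \rangle}_l$ in $D_{\operatorname{Qcoh}}(X)$.

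The key preliminary step is to verify the hypothesis of Proposition~\ref{prop:descend_bounds} with the sharp value $n = g+1$; that is, I need to upgrade $G$ from a strong generator of $D^b_{\operatorname{coh}}(\widetilde{X})$ with $\langle G \rangle_{g+1} = D^b_{\operatorname{coh}}(\widetilde{X})$ to a strong $\bigoplus$-generator of $D_{\operatorname{Qcoh}}(\widetilde{X})$ satisfying $\overline{\langle G \rangle}_{g+1} = D_{\operatorname{Qcoh}}(\widetilde{X})$. The strategy is to first invoke Aoki's theorem (Example~\ref{ex:quasi_excellent_strong_oplus}) to extract some strong $\bigoplus$-generator $G_0 \in D^b_{\operatorname{coh}}(\widetilde{X})$ for $D_{\operatorname{Qcoh}}(\widetilde{X})$, then exploit that $G_0 \in D^b_{\operatorname{coh}}(\widetilde{X}) = \langle G \rangle_{g+1} \subseteq \overline{\langle G \rangle}_{g+1}$. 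Combining this containment with Remark~\ref{rmk:strong_oplus_to_strong_gerator_time_bound} and the closure of $\overline{\langle G \rangle}_{g+1}$ under small coproducts should deliver the desired equality while preserving the index.

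With both $\mathcal{O}_X \in \overline{\langle \mathbb{R}\pi_\ast G \rangle}_l$ and $D_{\operatorname{Qcoh}}(\widetilde{X}) = \overline{\langle G \rangle}_{g+1}$ in hand, Proposition~\ref{prop:descend_bounds} applied with $n = g+1$ delivers
\[
\dim D^b_{\operatorname{coh}}(X) \leq (g+1) \cdot \operatorname{level}^{\mathbb{R}\pi_\ast G}(\mathcal{O}_X),
\]
as claimed.

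The principal obstacle I anticipate is precisely the quantitatively-sharp promotion $\langle G \rangle_{g+1} = D^b_{\operatorname{coh}}(\widetilde{X}) \Longrightarrow \overline{\langle G \rangle}_{g+1} = D_{\operatorname{Qcoh}}(\widetilde{X})$ without any multiplicative inflation of the index. A naive comparison to the Aoki-type generator $G_0$ through Lemma~\ref{lem:big_thick_compare} would only yield $\overline{\langle G \rangle}_{(g+1) n_0} = D_{\operatorname{Qcoh}}(\widetilde{X})$ for $n_0$ the $\bigoplus$-generation index of $G_0$, which is too weak. Circumventing this inflation and securing the clean bound $g+1$ is the heart of the argument and must rely on the coproduct-closure of $\overline{\langle G \rangle}_{g+1}$ together with the fact that it already contains every object of $\operatorname{perf}\widetilde{X}$, so that the compactly-generated structure of $D_{\operatorname{Qcoh}}(\widetilde{X})$ does the remaining work.
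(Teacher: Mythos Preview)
Your reduction to Proposition~\ref{prop:descend_bounds} has a genuine gap at exactly the point you flag: the promotion $\langle G\rangle_{g+1}=D^b_{\operatorname{coh}}(\widetilde{X})\Longrightarrow\overline{\langle G\rangle}_{g+1}=D_{\operatorname{Qcoh}}(\widetilde{X})$. The argument you sketch for it does not work. Knowing that $\overline{\langle G\rangle}_{g+1}$ contains every perfect complex and is closed under small coproducts and retracts is not enough, because $\overline{\langle G\rangle}_{g+1}$ is \emph{not} a triangulated subcategory: it is not closed under cones. The usual ``contains the compacts and is localizing, hence everything'' argument therefore does not apply. The best one can extract from compact generation is that an arbitrary object of $D_{\operatorname{Qcoh}}(\widetilde{X})$ is a homotopy colimit of perfects, i.e.\ a cone on a map between two objects of $\overline{\langle G\rangle}_{g+1}$, landing you only in $\overline{\langle G\rangle}_{2(g+1)}$ --- precisely the kind of inflation you were trying to avoid. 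In fact the implication you want is essentially the converse of Remark~\ref{rmk:strong_oplus_to_strong_gerator_time_bound}, and no such converse with the same index is available in the paper (or, to my knowledge, in the literature).

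A telling symptom is that your argument never uses that $X$ is affine. The paper's proof takes an entirely different route that exploits this hypothesis: it stays inside $D^b_{\operatorname{coh}}(X)$, shows via the projection formula that $\operatorname{perf}X\subseteq\langle\mathbb{R}\pi_\ast G\rangle_{(g+1)\cdot l}$, and then invokes the coghost characterization of level due to Oppermann--\v{S}\v{t}ov\'{\i}\v{c}ek (valid precisely because $X$ is affine, so $D^b_{\operatorname{coh}}(X)=D^b_{\operatorname{mod}}(R)$). A coghost chain out of $E_0$ is approximated by a coghost chain through perfect complexes (Letz), and the perfect bound forces vanishing after $(g+1)l$ steps, giving $E_0\in\langle\mathbb{R}\pi_\ast G\rangle_{(g+1)l+1}$. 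This bypasses any need to pass to $D_{\operatorname{Qcoh}}$ or to strong $\oplus$-generation on $\widetilde{X}$.
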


Before continuing forward with the proofs of Proposition~\ref{prop:descend_bounds} and Corollary~\ref{cor:rouquir_dimension_bounds_affine_variety}, the following highlight interesting examples for which these upper bounds can be discussed.

\begin{example}\label{ex:rouquier_bound_curves}
    Let $\nu\colon \widetilde{X}\to X$ be the normalization of a Noetherian quasiexcellent separated integral scheme $X$ of Krull dimension one. The map $\pi$ is finite dominant, and so it is proper surjective. If $G\in D^b_{\operatorname{coh}}(\widetilde{X})$ is a strong generator, then $\mathbb{R}\pi_\ast G$ is a strong generator. For instance, if $X=\operatorname{Spec}(R)$, then \cite[Corollary 8.4]{Christensen:1998} ensures $\overline{\langle \mathcal{O}_{X_\nu} \rangle}_3 = D_{\operatorname{Qcoh}}(\mathcal{O}_{X_\nu})$, and an application of Proposition~\ref{prop:descend_bounds} implies $\dim D^b_{\operatorname{mod}} (R)$ is at most $3 \cdot \operatorname{level} ^{\nu_\ast R} (R)$. For instance, suppose $X$ projective curve over a field. If $X$ is smooth, then \cite[Theorem 6]{Orlov:2009} showed the Rouquier dimension of $D^b_{\operatorname{coh}}(X)$ is one. If $X$ is rational and
    has either nodal or cuspidal singularities, then \cite[Theorem
    10]{Burban/Drozd:2011} ensures this value is at most two. However, Proposition~\ref{prop:descend_bounds} yields bounds more generally outside of these cases.
\end{example}

\begin{remark}\label{rmk:effective_bound_rouquier_dimension_affine_variety}
    Utilizing Corollary~\ref{cor:rouquir_dimension_bounds_affine_variety}, there are bounds that can be made explicit in practice. For instance, suppose $X$ is an affine variety over a field of characteristic zero and $\pi\colon \widetilde{X}\to X$ is a resolution of singularities,
    \begin{displaymath}
        \dim D^b_{\operatorname{coh}}(X)\leq (2\dim X + 1) \cdot \operatorname{level} ^{\mathbb{R}\pi_\ast G} (\mathcal{O}_X) 
    \end{displaymath}
    for any strong generator $G\in D^b_{\operatorname{coh}}(\widetilde{X})$ with minimal generation time. Indeed, there exists an upper bound on $\dim D^b_{\operatorname{coh}}( \widetilde{X})$ by $2\dim X$, cf. \cite[Proposition 7.9]{Rouquier:2008}. If \cite[Conjecture 10]{Orlov:2009} holds, then there is a sharper bound,
    \begin{displaymath}
        \dim D^b_{\operatorname{coh}}(X)\leq (\dim X + 1) \cdot \operatorname{level} ^{\mathbb{R}\pi_\ast G} (\mathcal{O}_X). 
    \end{displaymath}
    Appealing to Example~\ref{ex:resolution_char_zero}, there are similar bounds for when $X$ is a Noetherian quasiexcellent integral affine scheme of finite Krull dimension.
\end{remark}

\begin{example}\label{ex:splinter_rouquier_bound}
    Recall a Noetherian scheme $X$ is said to be a \textbf{derived splinter} if for each proper surjective morphism $\pi\colon Y \to X$, the natural map $\mathcal{O}_X \to \mathbb{R}\pi_\ast \mathcal{O}_Y$ splits in $D_{\operatorname{Qcoh}}(X)$. If $X$ has prime characteristic, then this coincides with a being a splinter  \cite{Bhatt:2012}. Suppose $X=\operatorname{Spec}(R)$ is an affine derived splinter variety over a field $k$. If $\pi\colon \widetilde{X} \to X$ is a resolution of singularities, then $\pi$ is proper surjective. This means the natural map $\mathcal{O}_X \to \mathbb{R}\pi_\ast \mathcal{O}_{\widetilde{X}}$ splits in $D_{\operatorname{Qcoh}}(X)$, and hence, $\mathcal{O}_X \in \langle \mathbb{R}\pi_\ast \mathcal{O}_{\widetilde{X}}\rangle_1$. Choose $G\in D^b_{\operatorname{coh}}(\widetilde{X})$ a strong generator with minimal generation time. Then $G\bigoplus \mathcal{O}_{\widetilde{X}}$ is a strong generator for $D^b_{\operatorname{coh}}(\widetilde{X})$. By Corollary~\ref{cor:rouquir_dimension_bounds_affine_variety} and Remark~\ref{rmk:effective_bound_rouquier_dimension_affine_variety}, $\dim D^b_{\operatorname{coh}}(X)$ is at most $\dim D^b_{\operatorname{coh}}(\widetilde{X}) + 1$.
\end{example}

\begin{proof}[Proof of Proposition~\ref{prop:descend_bounds}]
    By Theorem~\ref{thm:descent_conditions}, $\mathbb{R}\pi_\ast G$ is a strong $\bigoplus$-generator in $D_{\operatorname{Qcoh}}(X)$. Furthermore, $\pi$ being proper guarantees $\mathbb{R}\pi_\ast G\in D^b_{\operatorname{coh}}(X)$, and so $\mathbb{R}\pi_\ast G$ is a strong generator for $D^b_{\operatorname{coh}}(X)$ via Remark~\ref{rmk:strong_oplus_to_strong_gerator_time_bound}. If $l:=\operatorname{level}^{\mathbb{R}\pi_\ast \mathcal{O}_Y}  (\mathcal{O}_X)$, then the proof of Theorem~\ref{thm:descent_conditions} ensures $ D_{\operatorname{Qcoh}}(X) = \overline{\langle \mathbb{R}\pi_\ast G\rangle}_{ln}$, and once more by Remark~\ref{rmk:strong_oplus_to_strong_gerator_time_bound}, $D^b_{\operatorname{coh}}(X) = \langle G \rangle_{nl}$.
\end{proof}

\begin{proof}[Proof of Corollary~\ref{cor:rouquir_dimension_bounds_affine_variety}]
    Denote by $g$ for the generation time of $G$ in $D^b_{\operatorname{coh}}(\widetilde{X})$. If $\pi$ is a proper surjective morphism, then Theorem~\ref{thm:proper_surjective_descent} ensures $\mathbb{R}\pi_\ast G$ is a strong generator in $D^b_{\operatorname{coh}}(X)$. Choose an object $E_0\in D^b_{\operatorname{coh}}(X)$. Consider a $c$-fold $\mathbb{R}\pi_\ast G$-coghost map $f\colon E_c \to E_0$. If it had been the case that $E_0 \in \langle \mathbb{R}\pi_\ast G\rangle_1$, then there exist is nothing to check, so without loss of generality assume $c>1$. The morphism $f\colon E_c \to E_0$ is a composition of $c$ maps which are $\mathbb{R}\pi_\ast G$-coghost, i.e.
    \begin{displaymath}
        E_c \xrightarrow{f_c} E_{c-1} \xrightarrow{f_{c-1}} \cdots \xrightarrow{f_2} E_1 \xrightarrow{f_1} E_0.
    \end{displaymath}
    However, there exists perfect complexes $P_j\in \operatorname{perf}X$ and a commutative diagram
    \begin{displaymath}
        \begin{tikzcd}
            {E_c} & {E_{c-1}} & \cdots & {E_1} & {E_0} \\
            {P_c} & {P_{c-1}} & \cdots & {P_1} & {E_0}
            \arrow["{f_c}", from=1-1, to=1-2]
            \arrow["{f_{c-1}}", from=1-2, to=1-3]
            \arrow["{f_2}", from=1-3, to=1-4]
            \arrow["{f_1}", from=1-4, to=1-5]
            \arrow["{}", from=2-1, to=1-1]
            \arrow["{g_c}"', from=2-1, to=2-2]
            \arrow["{}", from=2-2, to=1-2]
            \arrow["{g_{c-1}}"', from=2-2, to=2-3]
            \arrow["{g_2}"', from=2-3, to=2-4]
            \arrow["{}", from=2-4, to=1-4]
            \arrow["{g_1}"', from=2-4, to=2-5]
            \arrow["{=}"', from=2-5, to=1-5]
        \end{tikzcd}
    \end{displaymath}
    where each $g_j$ is $\mathbb{R}\pi_\ast G$-coghost \cite[$\S 2.8$]{Letz:2021}. Furthermore, this diagram enjoys the condition that $f_c\circ \cdots \circ f_1=0$ if, and only if, $g_c\circ \cdots \circ g_1=0$. If $n:=\operatorname{level} ^{\mathbb{R}\pi_\ast G}(\mathcal{O}_X)$, then $\mathcal{O}_X \in \langle \mathbb{R}\pi_\ast \mathcal{O}_X\rangle_n$, and so for all $P\in \operatorname{perf}X$,
    \begin{displaymath}
        P \in \langle \mathbb{R}\pi_\ast \mathcal{O}_X\rangle_n \overset{\mathbb{L}}{\otimes} P \subseteq \langle \mathbb{R}(G \overset{\mathbb{L}}{\otimes} \mathbb{L} \pi^\ast P)\rangle_n.
    \end{displaymath}
    This last inclusion comes from the fact tensoring with a perfect complex is an endofunctor on $D^b_{\operatorname{coh}}(X)$ and uses the projection formula. If $P\in \operatorname{perf}X$, then $G \overset{\mathbb{L}}{\otimes} \mathbb{L} \pi^\ast P \in D^b_{\operatorname{coh}}(\widetilde{X})$, and hence,
    \begin{displaymath}
        \begin{aligned}
            \operatorname{level}^{\mathbb{R}\pi_\ast G}  (P) &\leq \operatorname{level}^{\mathbb{R}\pi_\ast G}  ( \mathbb{R}(G \overset{\mathbb{L}}{\otimes} \mathbb{L} \pi^\ast P) ) \cdot \operatorname{level}^{\mathbb{R}(G \overset{\mathbb{L}}{\otimes} \mathbb{L} \pi^\ast P) }  (P)
            \\&\leq (g + 1) \cdot n. 
        \end{aligned}
    \end{displaymath}
    This ensures $\operatorname{perf}X \subseteq \langle \mathbb{R}\pi_\ast G\rangle_{n(g+1)}$. Hence, the composition
    \begin{displaymath}
        P_c \xrightarrow{g_c} P_{c-1} \xrightarrow{g_{c-1}} \cdots \xrightarrow{g_2} P_1
    \end{displaymath}
    vanishes when $c-1 \geq n (g+1)$. This ensures $f_c \circ \cdots \circ f_1=0$ when $c\geq n (g+1) + 1$. By \cite[Theorem 24]{OS:2012}, $E \in \langle \mathbb{R}\pi_\ast G \rangle_{n(g+1)+1}$, and this furnishes the proof.
\end{proof}

\bibliographystyle{alpha}
\bibliography{mainbib}

\end{document}